\newtheorem{theorem}{Theorem}[section]
\newtheorem{lemma}[theorem]{Lemma}
\newtheorem{proposition}[theorem]{Proposition}
\theoremstyle{definition}
\newtheorem{definition}[theorem]{Definition}
\newtheorem{procedure}[theorem]{Procedure}
\newtheorem{example}[theorem]{Example}
\newtheorem{remark}[theorem]{Remark}
\numberwithin{equation}{section}
\begin{document}

\title[Minimum spanning paths and Hausdorff distance]{Minimum spanning paths and Hausdorff distance in finite ultrametric spaces}

\author{Evgeniy Petrov}
\address{Institute of Applied Mathematics and Mechanics of the NAS of Ukraine, Dobrovolskogo str. 1, Slovyansk 84100, Ukraine}
\email{eugeniy.petrov@gmail.com}

\begin{abstract}
It is shown that a minimum weight spanning tree of a finite ultrametric space can be always found in the form of path. As a canonical representing tree such path uniquely defines the whole space and, moreover, it has much more simple structure. Thus, minimum spanning paths are a convenient tool for studying finite ultrametric spaces.
To demonstrate this we use them for characterization of some known classes of ultrametric spaces. The explicit formula for Hausdorff distance in finite ultrametric spaces is also found. Moreover, the possibility of using minimum spanning paths for finding this distance is shown.
\end{abstract}

\keywords{Finite ultrametric space, Hausdorff distance, minimum spanning tree, representing tree, strictly $n$-ary tree, injective internal labeling}

\subjclass[2010]{54E35, 05C05}

\maketitle

\section{Introduction}
The correspondence between ultrametric spaces and trees is well-known. A convenient presentation of finite ultrametric spaces by monotone rooted trees was proposed in~\cite{GV12,GV}. Such  trees are known as canonical trees. So-called Cartesian trees are an alternative approach, see~\cite{DLW09} and also~\cite[p.~1744]{GV12} for details. These two types of trees can be characterized as labeled trees since the distance between two points in ultrametric space is defined by some label assigned to a vertex of a corresponding representing tree. It is also possible to distinguish the another type of trees so-called weighted trees.  The first example is equidistant trees, see e.g.,~\cite[p.~151]{SS03}. The distance between two points of a space here is defined as a sum of edge weights  belonging to a path connecting these points in the corresponding tree. The second example is well-known minimum weight spanning trees. In this case the distance between two point is defined as a maximal weight of the edge belonging to the path connecting these points. For connections between infinite ultrametric spaces and tress see, e.g.,~\cite{H04, H12, LB17, L03}.

The problem of finding a minimum spanning tree of a weighted graph is well studied, see e.g., \cite{FT87, KKT95, CLRS09, PR02} for  different algorithms. The weighted graph obtained from a finite metric space has a restriction on weights caused by the triangle inequality. The algorithm for this case was considered in~\cite{I99}. The algorithms for construction of minimum spanning trees of finite ultrametric spaces can be found, e.g., in~\cite{GV12, WCT99, CC04, S20}.

A simple algorithm which produces minimum spanning paths in the case of finite ultrametric spaces was proposed in~\cite{D82}.
In Section~\ref{MSPA} we analyse the work of this algorithm on representing trees. A question how to distinguish the set of balls in a finite ultrametric space using only any minimum spanning path of this space is also considered.

In section~\ref{MSTFSC} using minimum spanning paths we give a set of criteria which guarantee that a finite ultrametric space belongs to some special class.  Moreover, we give a criterion which guarantees  that every minimum spanning tree of the space is a path.

Some questions related to Hausdorff metric and Gromov-Hausdorff metric (see, e.g.,~\cite{BBI01,G01} for the definitions of these metrics) in non-Archimedean spaces were considered in~\cite{Q09,Q14}. In particular, in~\cite[Lemma~2.1]{Q14} it was found an explicit formula for Hausdorff distance between any two distinct balls of a finite ultrametric space. See also Lemma 2.6 in~\cite{D19} for a variation of this formula. In Section~\ref{HDandMST} we generalize this result by giving an explicit formula for Hausdorff distance between arbitrary subsets of a finite ultrametric space. We also give an example of using minimum spanning paths for calculating this distance. Note that Hausdorff distance has various applications in both applied and abstract areas of mathematics including computer vision, computer graphics, nonsmooth analysis, optimization theory and calculus of variations, see, e.g.,~\cite{KMCM20, DFS17, AVZ16} and references therein.

Let us introduce main definitions and required technical results.

\begin{definition}\label{d1.1}
An \textit{ultrametric} on a set $X$ is a function $d\colon X\times X\rightarrow \mathbb{R}^+$, $\mathbb R^+ = [0,\infty)$, such that for all $x,y,z \in X$:
\begin{itemize}
\item [(i)] $d(x,y)=d(y,x)$,
\item [(ii)] $(d(x,y)=0)\Leftrightarrow (x=y)$,
\item [(iii)] $d(x,y)\leq \max \{d(x,z),d(z,y)\}$.
\end{itemize}
\end{definition}
Inequality (iii) is often called the {\it strong triangle inequality}. The \emph{spectrum} of an ultrametric space $(X,d)$ is the set
$$
\operatorname{Sp}(X) := \{d(x,y)\colon x,y \in X\}.
$$
The quantity
$$
\operatorname{diam} X := \sup\{d(x,y)\colon x,y\in X\}
$$
is the diameter of $(X,d)$.

A \textit{graph} is a pair $(V,E)$ consisting of a nonempty set $V$ and a set $E$ whose elements are unordered pairs of different points from $V$. For a graph $G=(V,E)$, the sets $V=V(G)$ and $E=E(G)$ are called \textit{the set of vertices (or nodes)} and \textit{the set of edges}, respectively. If $\{x,y\} \in E(G)$, then the vertices $x$ and $y$ are \emph{adjacent}. A graph $G=(V,E)$ together with a function $w\colon E\rightarrow \mathbb{R}^+$ is called a \textit{weighted} graph, and $w$ is called a \textit{weight}.

Recall that a \emph{path} is a nonempty graph $P$ whose vertices can be numbered so that
$$
V(P) = \{x_1,...,x_k\}\quad \text{and} \quad E(P) = \{\{x_1,x_2\},...,\{x_{k-1},x_k\}\}.
$$
A finite graph $C$ is a \textit{cycle} if $|V(C)|\geq 3$ and there exists an enumeration $(v_1,v_2,...,v_n)$
of its vertices such that
\begin{equation*}
(\{v_i,v_j\}\in E(C))\Leftrightarrow (|i-j|=1\quad \mbox{or}\quad |i-j|=n-1).
\end{equation*}
A graph $H$ is a \emph{subgraph} of a graph $G$ if
$$
V(H) \subseteq V(G) \quad \text{and} \quad E(H) \subseteq E(G).
$$
We write $H\subseteq G$ if $H$ is a subgraph of $G$. A cycle $C$ is a \emph{Hamilton cycle} in a graph $G$ if $C\subseteq G$ and $V(C) =V(G)$.

A connected graph without cycles is called a \emph{tree}. A vertex $v$ of a tree $T$ is a \emph{leaf} if the \emph{degree} of $v$ is less than two, i.e.,
$$
\delta(v) = |\{u \in V(T) \colon \{u, v\} \in E(T)\}| < 2.
$$
If a vertex $v \in V(T)$ is not a leaf of $T$, then we say that $v$ is an \emph{internal node} of $T$. A tree $T$ may have a distinguished vertex $r$ called the \emph{root}; in this case $T$ is a \emph{rooted tree} and we write $T=T(r)$.
A \emph{labeled rooted tree} $T=T(r,l)$ is a rooted tree $T(r)$ with a labeling $l\colon V(T)\to L$ where $L$ is a set of labels. Generally we follow terminology used in~\cite{BM}.

\begin{definition}\label{def3.1}
A graph $G$, $E(G) \neq \varnothing$, is called \emph{complete $k$-partite} if its vertices can be divided into disjoint nonempty sets $X_1$, $\ldots$, $X_k$ so that $k \geqslant 2$ and there are no edges joining the vertices of the same set $X_i$ and any two vertices from different $X_i$ and $X_j$, $1\leqslant i,j \leqslant k$ are adjacent. In this case we write $G=G[X_1,\ldots, X_k]$.
\end{definition}
We shall say that $G$ is a {\it complete multipartite graph} if there exists $k$ such that $G$ is complete $k$-partite.

\begin{definition}\label{d14}
Let $(X,d)$ be a finite ultrametric space and let $t\in \operatorname{Sp}(X)$ be nonzero. Denote by $G_{t,X}$ a graph for which $V(G_{t,X})=X$ and
$$
(\{u,v\}\in E(G_{t,X}))\Leftrightarrow (d(u,v)=t).
$$
In particular, for $|X| \geqslant 2$ we write $G_{D, X} := G_{\operatorname{diam} X, X}$ for short and call $G_{D, X}$ the \emph{diametrical graph} of $X$.
\end{definition}

\begin{theorem}[\!\cite{DDP(P-adic)}]\label{t13}
Let $(X,d)$ be a finite ultrametric space, $|X|\geqslant 2$. Then $G_{D,X}$ is complete multipartite.
\end{theorem}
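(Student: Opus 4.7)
The plan is to exhibit the multipartite decomposition directly, by producing an equivalence relation on $X$ whose classes will serve as the parts $X_1,\ldots,X_k$ of Definition~\ref{def3.1}. The natural candidate is to declare $u\sim v$ when either $u=v$ or $d(u,v)<D$, where $D=\operatorname{diam} X$. If this is an equivalence relation, then two vertices in the same class are non-adjacent in $G_{D,X}$ (distance strictly less than $D$), while two vertices $u,v$ in different classes satisfy $d(u,v)\geqslant D$ by construction and $d(u,v)\leqslant D$ by the definition of diameter, hence $d(u,v)=D$, giving an edge. So the multipartite edge pattern will be forced once we have the equivalence relation and verify $k\geqslant 2$.

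First I would verify that $\sim$ is an equivalence relation. Reflexivity is built into the definition and symmetry is immediate from axiom~(i) of Definition~\ref{d1.1}. The only substantive step, and the place where the ultrametric hypothesis is genuinely used, is transitivity: if $u\sim v$ and $v\sim w$ with $u\neq w$, the strong triangle inequality~(iii) yields
$$
d(u,w)\leqslant \max\{d(u,v),\,d(v,w)\}<D,
$$
so $u\sim w$. This is essentially the whole conceptual content; if one tried the same argument with an ordinary metric it would fail, which is consistent with the fact that the diametrical graph of a general finite metric space need not be multipartite.

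Second, I would argue that the partition really produces at least two blocks, which Definition~\ref{def3.1} demands. Since $|X|\geqslant 2$, there exist distinct $x,y\in X$, so $\operatorname{Sp}(X)$ contains a nonzero value and $D>0$. Choosing any pair $x,y$ with $d(x,y)=D$ places them in different $\sim$-classes, giving $k\geqslant 2$. Combined with the edge analysis in the first paragraph, this shows $G_{D,X}=G[X_1,\ldots,X_k]$, so $G_{D,X}$ is complete multipartite. The main obstacle is simply recognizing that the strong triangle inequality is exactly the ingredient that promotes ``being at distance less than $D$'' from a symmetric reflexive relation to a transitive one; once that is in hand, the rest of the proof is bookkeeping.
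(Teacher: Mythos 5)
Your proof is correct and is essentially the standard argument: the paper itself states Theorem~\ref{t13} as a citation from~\cite{DDP(P-adic)} without reproducing a proof, and the proof given there proceeds exactly as you do, by showing that ``$u=v$ or $d(u,v)<\operatorname{diam} X$'' is an equivalence relation via the strong triangle inequality and taking its classes as the parts. Your handling of the remaining details (edges exactly between distinct classes because $d(u,v)\leqslant\operatorname{diam} X$ always holds, and $k\geqslant 2$ because the diameter is attained in a finite space) is complete.
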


In the following procedure with every nonempty finite ultrametric space $(X, d)$ we associate a labeled rooted tree $T_X=T_X(r,l)$ with $r=X$ and $l\colon V(T_{X})\to \mathbb{R}^{+}$. For these purposes we use an approach based on diametrical graphs which for the first time was considered in~\cite{PD(UMB)}. The only difference between obtained trees and the canonical trees is that we name vertices of the tree $T_X$ by subsets of the set $X$, which in fact are parts of the corresponding diametrical graphs.

\begin{procedure}\label{p1} If $X$ is a one-point set, then $T_X$ is the rooted tree consisting of the node $X$ with the label~$\operatorname{diam} X=0$. Note that for the rooted trees consisting only of one node, we consider that this node is the root as well as a leaf.

Let $|X|\geqslant 2$. According to Theorem~\ref{t13} we have $G_{D,X} = G_{D,X}[X_1,...,X_k]$ and $k \geqslant 2$. In this case the root of the tree $T_X$ is labeled by $\operatorname{diam} X$ and $T_X$ has the nodes $X_1,...,X_k$ of the first level with the labels
\begin{equation}\label{e2.7}
l(X_i)= \operatorname{diam} X_i,
\end{equation}
$i = 1,\ldots,k$. The nodes of the first level with the label $0$ are leaves, and those indicated by strictly positive labels are internal nodes of the tree $T_X$. If the first level has no internal nodes, then the tree $T_X$ is constructed. Otherwise, by repeating the above-described procedure with each $X_i$ satisfying
$\operatorname{diam} X_i > 0$
instead of $X$, we obtain the nodes of the second level, etc. Since $X$ is finite, all vertices on some level will be leaves, and the construction of $T_X$ is completed.

The above-constructed labeled rooted tree $T_X$ is called the \emph{representing tree} of the ultrametric space $(X, d)$.
\end{procedure}

\begin{definition}\label{d15}
Let $G$ be a nonempty graph, and let $V_0(G)$ be the set (possibly empty) of all isolated vertices of $G$. Denote by $G'$ the subgraph of the graph $G$, induced by set $V(G)\backslash V_0(G)$.
\end{definition}

Let $T$ be a rooted tree with the root $r$. We denote by $\overline{L}_T$ the set of the leaves of $T$, and, for every node $v$ of $T$, by $T_v=T(v)$ the rooted subtree of $T$ with the root $v$ and such that, for every $u\in V(T)$,
\begin{equation}\label{e2.1}
(u \in V(T_v)) \Leftrightarrow (u=v \text{ or $u$ is a successor of $v$ in $T$}).
\end{equation}
If $(X,d)$ is a finite ultrametric space and $T=T_X$ is the representing tree of $X$, then for every node $v\in V(T)$ there are $x_1$, $\ldots$, $x_k \in X$ such that $\overline{L}_{T_v}=\{\{x_1\}, \ldots, \{x_k\}\}$. Thus $\overline{L}_{T_v}$ is a set of one-point subsets of $X$. In what follows we will use the notation $L(T_v)$ for the set $\{x_1, \ldots, x_k\}$.

Let $(X,d)$ be a metric space. Recall that a  \emph{closed ball} with a radius $r \geqslant 0$ and a center $c\in X$ is the set
$$
B_r(c)=\{x\in X\colon d(x,c)\leqslant r\}.
$$
The \emph{ballean} $\mathbf B_X$ of a metric space $(X,d)$ is the set of all closed balls in $(X,d)$. Note that the sets of all open and closed balls in a finite metric space coincide.

The following two statements are fundamental facts from the theory of finite ultrametric spaces.

\begin{proposition}\label{lbpm}
Let $(X,d)$ be a finite nonempty ultrametric space with the representing tree $T_X$. Then the following statements hold.
\begin{itemize}
\item [(i)] $L(T_v)$ belongs to $\mathbf{B}_X$ for every node $v\in V(T_X)$.
\item [(ii)] For every $B \in \mathbf{B}_X$ there exists the unique node $v$ such that $L(T_v)=B$.
\end{itemize}
\end{proposition}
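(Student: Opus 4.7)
The plan rests on the observation hidden inside Procedure~\ref{p1}: every node $v$ of $T_X$ is by construction itself a subset of $X$, and a trivial induction on the depth shows $L(T_v)=v$ as subsets of $X$. So part~(i) becomes the claim that every such subset $v$ is a closed ball, and part~(ii) becomes the claim that every closed ball occurs as some node $v$.

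For part~(i) I would first verify that along any root-to-leaf chain in $T_X$ the labels strictly decrease: if $v'$ is a child of $v$, then $v'$ is a part of $G_{D,v}$, so no two of its points lie at distance $\operatorname{diam}(v)$, whence $\operatorname{diam}(v')<\operatorname{diam}(v)$. Then, given a node $v$ and any $u\in X\setminus v$, let $w$ be the deepest ancestor of $v$ that still contains $u$. By construction $u$ and the points of $v$ lie in distinct parts of $G_{D,w}$, so $d(u,x)=\operatorname{diam}(w)>\operatorname{diam}(v)$ for every $x\in v$. Combined with the obvious bound $d(x,x')\leq \operatorname{diam}(v)$ for $x,x'\in v$, this yields $v=B_{\operatorname{diam}(v)}(x)$ for any $x\in v$, proving $L(T_v)=v\in\mathbf{B}_X$.

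For part~(ii), existence in the case $|B|=1$ is immediate from the corresponding leaf $\{x\}$. Otherwise I would choose $c\in B$ and set $r=\operatorname{diam}(B)$, noting that $B=B_r(c)$ by the standard ultrametric fact that each point of a ball is a centre and that any radius down to the diameter gives the same ball. The nodes of $T_X$ containing $c$ form a chain $X=v_0\supsetneq v_1\supsetneq\cdots\supsetneq\{c\}$ with strictly decreasing labels; let $v_i$ be the first node in this chain with $\operatorname{diam}(v_i)\leq r$. The inclusion $v_i\subseteq B_r(c)$ is immediate, while the reverse inclusion follows from the same ancestor computation as in~(i): any $y\in B\setminus v_i$ would force $d(y,c)=\operatorname{diam}(v_{i-1})>r$, contradicting $y\in B_r(c)$. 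Hence $L(T_{v_i})=v_i=B$. Uniqueness is then automatic, since any two distinct nodes of $T_X$ are either strictly nested (ancestor and descendant) or disjoint (descendants of distinct children of their common ancestor), and therefore represent distinct subsets of $X$.

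The whole argument is bookkeeping: the one piece of genuine content is the identity $d(u,x)=\operatorname{diam}(w)$ for the nearest common ancestor $w$ of $u$ and $v$, which simply unwinds what the diametrical-graph construction provides. I do not anticipate a serious obstacle; the only subtlety worth writing out carefully is the auxiliary fact $B=B_{\operatorname{diam}(B)}(c)$ for every $c\in B$, which pins down the radius at which to terminate the descent.
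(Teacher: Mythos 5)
Your argument is correct and self-contained. The paper itself does not prove Proposition~\ref{lbpm}; it defers to the reference~\cite{P(TIAMM)}, so there is no internal proof to compare against, but your route --- identifying $L(T_v)$ with the node $v$ viewed as a subset of $X$, showing $d(u,x)=\operatorname{diam}(w)$ for the deepest common ancestor $w$, and descending the chain of nodes containing a centre $c$ until the diameter first drops to $\operatorname{diam}(B)$ --- is exactly the natural argument that Procedure~\ref{p1} invites, and every step checks out. The only nitpick is in part~(ii): for $y\in B\setminus v_i$ the deepest ancestor of $v_i$ containing $y$ need not be $v_{i-1}$ itself, so the correct statement is $d(y,c)=\operatorname{diam}(v_j)\geqslant\operatorname{diam}(v_{i-1})>r$ for some $j\leqslant i-1$; the contradiction you want survives unchanged.
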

In fact Proposition~\ref{lbpm} claims that the ballean of a finite ultrametric space $(X,d)$ is the vertex set of representing tree $T_X$.

\begin{lemma}\label{l2}
Let $(X, d)$ be a finite ultrametric space with the representing tree $T_X$ whose labeling $l\colon V(T_X)\to \operatorname{Sp}(X)$ is defined by~(\ref{e2.7}) and let $u_1 = \{x_1\}$ and $u_2 = \{x_2\}$ be two different leaves of the tree $T_X$. If
$(u_1, v_1, \ldots, v_n, u_2)$ is the path joining the leaves $u_1$ and $u_2$ in $T_X$, then
\begin{equation}\label{e112}
d(x_1, x_2) = \max\limits_{1\leqslant i \leqslant n} l({v}_i).
\end{equation}
\end{lemma}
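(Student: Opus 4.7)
My plan is to identify the highest vertex on the path---the least common ancestor of the two leaves---and show that its label is simultaneously the maximum label on the path and equal to $d(x_1,x_2)$.

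First I would note that since $T_X$ is a tree, the path $(u_1,v_1,\ldots,v_n,u_2)$ is unique, so there is a well-defined vertex $v^{*}\in\{v_1,\ldots,v_n\}$ at which the path turns around, namely the least common ancestor of $u_1$ and $u_2$ in the rooted tree $T_X$. Every other $v_i$ on the path is then a proper descendant of $v^{*}$. By Proposition~\ref{lbpm}, $L(T_{v^{*}})$ is a ball of $X$ containing both $x_1$ and $x_2$, and since $x_1\neq x_2$ the node $v^{*}$ is internal. By Procedure~\ref{p1} its children in $T_X$ are therefore precisely the parts $X_1,\ldots,X_k$ of the complete multipartite graph $G_{D,v^{*}}$, and $l(v^{*})=\operatorname{diam} v^{*}>0$.

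Second, I would establish $l(v^{*})=d(x_1,x_2)$. Because $v^{*}$ is the LCA, the path leaves $v^{*}$ through two distinct children, so there exist $i\neq j$ with $x_1\in X_i$ and $x_2\in X_j$. Since $G_{D,v^{*}}$ is complete multipartite with parts $X_1,\ldots,X_k$ by Theorem~\ref{t13}, the pair $\{x_1,x_2\}$ is an edge of $G_{D,v^{*}}$, and Definition~\ref{d14} then gives $d(x_1,x_2)=\operatorname{diam} v^{*}=l(v^{*})$.

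Third, I would verify $l(v^{*})=\max_{1\leqslant i\leqslant n} l(v_i)$. For any $v_i\neq v^{*}$ on the path, relation~(\ref{e2.1}) together with Procedure~\ref{p1} forces the set-theoretic containment $v_i\subseteq v^{*}$ as subsets of $X$, whence $l(v_i)=\operatorname{diam} v_i\leqslant\operatorname{diam} v^{*}=l(v^{*})$. Combined with the previous paragraph this yields~(\ref{e112}).

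The main step is the middle one: converting the purely tree-structural fact that $x_1$ and $x_2$ are separated at $v^{*}$ into the metric equality $d(x_1,x_2)=l(v^{*})$. The real work is done by Theorem~\ref{t13} together with the fact that Procedure~\ref{p1} installs the parts of the diametrical graph as the children of each internal node, so siblings living in different parts of the parent's diametrical graph are automatically at the maximum possible distance within their parent. Everything else is bookkeeping about the tree order and the monotonicity of the diameter under inclusion.
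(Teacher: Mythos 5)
Your proof is correct, and it is essentially the argument the paper relies on (the paper only cites \cite{PD(UMB)} for this lemma): identify the least common ancestor $v^{*}$ of the two leaves, use Theorem~\ref{t13} and Procedure~\ref{p1} to conclude $d(x_1,x_2)=\operatorname{diam} v^{*}=l(v^{*})$ because $x_1$ and $x_2$ lie in different parts of the diametrical graph of $v^{*}$, and use monotonicity of the diameter under inclusion to see that $l(v^{*})$ dominates the labels of all other path vertices, which are proper descendants of $v^{*}$. No gaps.
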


The proofs of Proposition~\ref{lbpm} and Lemma~\ref{l2} can be found, for example, in~\cite{P(TIAMM)} and ~\cite{PD(UMB)}, respectively.

\section{Minimum weight spanning tree algorithms}\label{MSPA}

Every finite metric space $(X,d)$ can be considered as a complete weighted graph $(G_X,w)$ if we define
$$
V(G_X)=X \  \text{ and } \  w(\{x,y\})=d(x,y)
$$
for every $x,y \in X$, $x\neq y$.

Recall that a minimum weight spanning tree or simply \emph{minimum spanning tree} $(T_{\min},w)$ of a finite metric space $(X,d)$ is a spanning tree of the graph $(G_X,w)$ such that the sum of weights of all edges of $(T_{\min}, w)$ is smallest among all spanning trees of $(G_X,w)$.

Let us consider the following algorithm described in~\cite{GV12}. Below we adopt it to our terminology.

\textbf{Algorithm 1.} Let $(X,d)$ be a finite ultrametric space and let $(G_X,w)$ be its corresponding weighted graph.

\begin{itemize}
  \item[(i)] Let $G_{D,X}=$ $G_{D,X}[X_1,...,X_k]$ be the diametrical graph of the space $X$. Let us choose in $(G_X,w)$ any $k-1$ edges that would form a spanning tree in the factor-graph obtained from $(G_X,w)$ by contracting each of the $k$ parts $X_1,...,X_k$ to a vertex.
  \item[(ii)] Repeat the same procedure for each of the parts $X_1,...,X_k$ etc., until every
obtained part becomes a vertex.
\end{itemize}

Clearly, every minimum weight spanning tree can be obtained by this procedure.

\begin{remark}[\!\cite{GV12}]\label{r31}
All minimum weight spanning trees of a given $(X,d)$ have one and the same unique weight distribution. Note that all $k-1$ edges chosen at step (i) are of weight $\operatorname{diam} X$ and the edges chosen while considering the parts $X_k$ are of weight $\operatorname{diam} X_k$. Let $\operatorname{Sp}(X)=\{0,d_1,...,d_n\}$. Denote by $I(T_X)$ the set of all inner nodes of the tree $T_X$ and by $S(v)$ the set of all direct successors of the node $v \in V(T_X)$. It is easy to see that for every $i\in \{1,...,n\}$ the corresponding weight $d_i$ appears
\begin{equation}\label{e4}
\sum_{v\in I(T_X) | l(v)=d_i}(|S(v)|-1)
\end{equation}
times in any minimum weight spanning tree.

The uniqueness of weight distribution follows, for example, from Kruskal greedy algorithm~\cite{K56}.
\end{remark}

\begin{remark}[\!\cite{GV12}]\label{r}
For every two points $x,y\in X$ the equality
\begin{equation}\label{e23}
   d(x,y)=\max\{w(e) \, |  \,  e\in E(P_{x,y})\}
\end{equation}
holds, where $P_{x,y}$ is a path connecting the vertices $x$, $y$ in the minimum weight spanning tree $(T_{\min},w)$.
\end{remark}

The following simple algorithm was proposed in~\cite[p. 189]{D82}.

\textbf{Algorithm 2.} Let $(X,d)$ be an ultrametric space with $|X|=n$.  Choose arbitrarily $x_1\in X$. For  $i:=1,..., n-1$ the point $x_{i+1}$ minimizes the distance $d(x,x_{i})$ among the set of points $x\in X$ not belonging to $\{x_1,..,x_{i}\}$.

Below we consider how this algorithm works on representing trees $T_X$ of finite ultrametric spaces $X$. Moreover, we prove that it indeed produces minimum spanning paths.

Recall that the \emph{node level} is the length of the path from the root to the node.

\textbf{Algorithm 3.} Let $(X,d)$ be an ultrametric space with $|X|=n$. Set $i:=1$ and let $x_1$ be an arbitrary leaf of $T_X$. Define $P_1:=\{x_1\}$.
Consider the following procedure for the tree $T_X$.
\begin{itemize}
  \item [(i)] Choose a node $v_i\in V(T_X)$ such that $x_i\in L(T_{v_i})$, $L(T_{v_i})\setminus P_i\neq\varnothing$ and the level of the node $v_i$ is as maximal as possible. Set $i:=i+1$ and go to step (ii).
  \item [(ii)] Choose arbitrary $x_i \in L(T_{v_{i-1}})\setminus P_{i-1}$. Set $P_{i}:=P_{i-1}\cup\{x_i\}$. If $i=n$, then construction of the path $P$ is completed else go to step (i).
\end{itemize}

\begin{remark}\label{r23} Note that condition (i) means that we choose the smallest ball of the space $X$ that contains the vertex $x_i$ and contains at least one vertex which does not belong to $P_i$. According to the construction of $T_X$ and Lemma~\ref{l2} the distance $d\{x_i,x_{i+1}\}$  is equal to the label $l(v_i)$, $i=1,\ldots,n-1$.
Using Lemma~\ref{l2} and the fact that labels of vertices of representing trees strictly decrease on the paths from the root to any leaves we see that the minimum of the value $d(x,x_i)$  $x \not \in \{x_1,...,x_{i}\}$ is achieved at any $x\in L(T_{v_{i}})\setminus P_{i}$ (compare with Algorithm 2).

It is easy to see that one and the same node can be chosen several times, i.e., the equality $v_i=v_j$, $i\neq j$ is possible. By step (ii) all elements of the set $P_n$ are different, i.e., $P_n=L_{T_X}=X$. The existence of a vertex $v_i$ such that the conditions $x_i\in L(T_{v_i})$, $L(T_{v_i})\setminus P_i\neq\varnothing$ follows from the existence of the root of the tree $T_X$.
\end{remark}

\begin{example}
The rooted representing tree $T_Z$ of an ultrametric space $Z$ and the corresponding minimum spanning path $P$ are depicted in Figures~\ref{fig2} and~\ref{fig3}, respectively. To demonstrate the work of Algorithm 3 it is sufficient to indicate the sequence of inner nodes chosen at step (i). Since all the inner nodes of $T_Z$ are marked by different labels we can represent the sequence in the following form: $l(v_1)=1$; $l(v_2)=1$; $l(v_3)=4$; $l(v_4)=4$; $l(v_5)=2$; $l(v_6)=9$; $l(v_7)=3$; $l(v_8)=5$; $l(v_9)=9$;
$l(v_{10})=7$; $l(v_{11})=7$; $l(v_{12})=8$; $l(v_{13})=8$;  $l(v_{14})=6$.
\end{example}

\begin{figure}[htb]
\begin{tikzpicture}[scale=1.4]
\tikzstyle{level 1}=[level distance=10mm,sibling distance=2.4cm]
\tikzstyle{level 2}=[level distance=10mm,sibling distance=0.8cm]
\tikzstyle{level 3}=[level distance=10mm,sibling distance=.6cm]
\tikzset{small node/.style={circle,draw,inner sep=0.7pt,fill=black}}
\tikzset{solid node/.style={circle,draw,inner sep=1.5pt,fill=black}}
\tikzset{white node/.style={circle,draw,inner sep=1.5pt,fill=white}}
\tikzset{common node/.style={circle,draw,inner sep=1.5pt,fill=gray}}

\node at (0,0) [small node, label=above: {\tiny $v_i$}] {}
child{node [small node, label=left: {\tiny $v_{i_1}$}] {}
	child{node [small node, label=right: {\tiny $$}] {}
    }
   	child{node [small node, label=below: {\scriptsize $$}] {}}
	child{node [small node, label=right: {\tiny $x_i$}] {}
    }
}
child{node [small node, label=right: {\tiny $v_{i_2}$}] {}
	child{node [small node, label=right: {\tiny $x_{i+1}$}] {}
    }
	child{node [small node, label=below: {\scriptsize $$}] {}}
}
child{node [small node, label=right: {\tiny $v_{i_k}$}] {}
	child{node [small node, label=right: {\tiny $$}] {}
    }
   	child{node [small node, label=below: {\scriptsize $$}] {}}
	child{node [small node, label=right: {\scriptsize $x_m$}] {}
    }
};
\end{tikzpicture}
\caption{The subtree $T_{v_i}$ of the tree $T_X$.}
\label{fig0}
\end{figure}
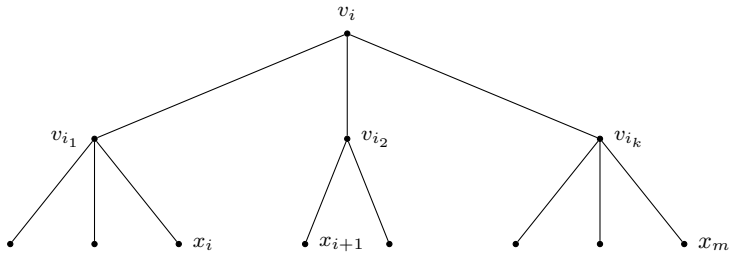

\begin{proposition}\label{p25}
Every path produced by Algorithm 3 is a minimum spanning path.
\end{proposition}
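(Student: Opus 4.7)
The plan is to show that the total weight of the path $P$ produced by Algorithm~3 equals the total weight of any minimum spanning tree of $(X,d)$, by establishing that $P$ realizes exactly the weight distribution described in Remark~\ref{r31}.

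First I would prove the following contiguity claim: for every node $v\in V(T_X)$, the set of indices $I(v):=\{i\colon x_i\in L(T_v)\}$ is a (possibly empty) interval in $\{1,\dots,n\}$. Suppose $i\in I(v)$ and $L(T_v)\setminus P_i\neq\varnothing$. Then $v$ itself is a legitimate candidate in step~(i), so the node $v_i$ selected by the algorithm has level at least that of $v$. Since every candidate node is an ancestor of the leaf $\{x_i\}$, and the ancestors of $\{x_i\}$ form a chain from $\{x_i\}$ to the root, the maximality condition forces $v_i\in V(T_v)$. Hence $x_{i+1}\in L(T_{v_i})\subseteq L(T_v)$, i.e., $i+1\in I(v)$. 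Thus the path cannot leave $L(T_v)$ before all its leaves are exhausted, proving contiguity.

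Next, fix an internal node $v\in I(T_X)$ with direct successors $s_1,\dots,s_k$, $k=|S(v)|$. Because $L(T_v)=\bigsqcup_{j=1}^{k}L(T_{s_j})$ and each $I(s_j)$ is itself a contiguous subinterval of $I(v)$, the algorithm visits the $k$ blocks $L(T_{s_1}),\dots,L(T_{s_k})$ inside $I(v)$ in some order, each as a single contiguous run. The number of consecutive indices $i$ for which $x_i$ and $x_{i+1}$ lie in different children of $v$ is therefore exactly $k-1=|S(v)|-1$. For each such transition index, the smallest node of $T_X$ containing both $x_i$ and $x_{i+1}$ as descendant leaves is $v$, so by Remark~\ref{r23} the weight $w(\{x_i,x_{i+1}\})$ equals $l(v)$. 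Conversely, any index $i$ with $v_i=v$ must be such a transition, because the maximality of the level of $v_i$ combined with $v_i=v$ forces the child of $v$ containing $x_i$ to be already fully included in $P_i$.

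Summing over all internal nodes yields
\[
\sum_{i=1}^{n-1} w(\{x_i,x_{i+1}\})
=\sum_{v\in I(T_X)}(|S(v)|-1)\,l(v)
=\sum_{i=1}^{n}d_i\sum_{\substack{v\in I(T_X)\\ l(v)=d_i}}(|S(v)|-1),
\]
which by Remark~\ref{r31} is precisely the total weight of any minimum spanning tree of $(X,d)$. Since $P$ is itself a spanning tree of $(G_X,w)$ (the vertices $x_1,\dots,x_n$ exhaust $X$ by Remark~\ref{r23}, and $P$ has $n-1$ edges), it is therefore a minimum spanning tree, hence a minimum spanning path. The main obstacle is the contiguity claim in the first paragraph; once it is established, the counting argument is essentially bookkeeping against Remark~\ref{r31}.
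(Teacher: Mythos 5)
Your proof is correct and follows essentially the same route as the paper: both arguments rest on the observation that the algorithm exhausts each ball $L(T_v)$ contiguously, so that the children of an internal node $v$ contribute exactly $|S(v)|-1$ edges of weight $l(v)$, and both then appeal to Remark~\ref{r31}. The only differences are cosmetic: you isolate and prove the contiguity claim explicitly (the paper asserts it via the maximality condition in step~(i)) and you compare total weights rather than the full weight distribution, either of which suffices.
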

\begin{proof}
According to Remark~\ref{r31} to show that the path $P_n$ is a minimum weight spanning path it suffices to show that $P_n$ has the weight distribution described by~(\ref{e4}). Indeed, let the node $v_i$ was chosen the first time at step (i), see Figure~\ref{fig0}. This means that some $x_i\in L(T_{v_i})$ was just chosen. Let $|S(v_i)|=k$ and let $v_{i_1},...,v_{i_k}$ be direct successors of $v_i$. Without loss of generality consider that $x_i \in L(T_{v_{i_1}})$. According to step (i)
all elements of the ball $L(T_{v_{i_1}})$ are already added to the path $P_i$.
Otherwise, we must chose $v_{i_{1}}$ instead of $v_i$.
Further we must chose the next point $x_{i+1}$ of the path belonging to $L(T_{v_i})\setminus L(T_{v_{i_1}})$. Without loss of generality consider that $x_{i+1}\in L(T_{v_{i_2}})$, etc. Let $x_m$ be the last one point chosen from the ball $L(T_{v_{i_k}})$.
According to Lemma~\ref{l2} in the part $(x_1,...,x_i,x_{i+1},...,x_m)$ of the path $P_n$ the weight $l(v_i)=d(x_i, x_{i+1})$ appears $|S(v_i)|-1=k-1$ times only during crossing from one of the balls $L(T_{v_{i_1}}),...,L(T_{v_{i_k}})$ to another. Taking into consideration that another vertices with the label $l(v_i)$ may exist in the tree $T_X$ we see that distribution~(\ref{e4}) holds for $d_i=l(v_i)$.
\end{proof}

Let $(X,d)$ be an ultrametric space with $|X|=n$. Denote by $\mathfrak{P}_{min}(X)$ the set of all  minimum spanning paths of the graph $(G_X,w)$. Let $P\in \mathfrak{P}_{min}(X)$, $V(P)=\{x_1,....,x_n\}$. Denote by \emph{path spectrum} $\mathcal{PS}(P)$  the following finite sequence of numbers $(s_1,...,s_{n-1})$ where $s_i=w(\{x_i, x_{i+1}\})=d(x_i, x_{i+1})$, $i=1,...,n-1$.

\begin{remark}\label{r55}
Using the representing tree $T_X$ of a finite ultrametric space $X$ it is easy to see which subsets of $X$ are balls in the space $X$. The situation with minimum spanning paths is also enough clear. Let $P\in \mathfrak{P}_{min}(X)$ with $V(P)=\{x_1,...,x_n\}$ and $\mathcal{PS}(P)=(s_1,..,s_{n-1})$. Using Remark~\ref{r} one easily can establish that there is one-to-one correspondence between nonsingular balls of the space $X$ and subsets of consecutive points $x_i, x_{i+1},..., x_{i+k}$, $i,k\geqslant 1$, $i+k\leqslant n$ that satisfy the following property
$$
s_{l}<s_{i-1}\, \text{ and } s_l<s_{i+k} \, \text{ for every } \, l\in \{i,...,i+k-1\}.
$$
Naturally, in the case $i=1$ only the inequality $s_l<s_{i+k}$ must hold as well as in the case $i+k=n$ only the inequality $s_l<s_{i-1}$.
 In the extremal case $i=1$ and $i+k=n$ we obtain that $\{x_1,...,x_n\}=X \in \mathbf B_X$.
 In other words, the subsequence $(x_i,x_{i+1}, ... ,x_{i+k})$ forms a ball if and only if all the weights inside this part of the path $P$ are surrounded by bigger weights $s_{i-1}$ and $s_{i+k}$. Clearly, the diameter of the ball $\{x_i, x_{i+1},..., x_{i+k}\}$ is the maximal value among $s_i,...,s_{i+k-1}$.
\end{remark}

\section{Minimum spanning paths for special classes of finite ultrametric spaces}\label{MSTFSC}
It is possible to distinguish classes of finite ultrametric spaces applying some restrictions on their representing trees.
Finite ultrametric spaces $X$ for which their representing trees $T_X$ are strictly binary,
have injective internal labeling, are extremal for the Gomory-Hu inequality, are as rigid as possible were considered in~\cite{DPT}, \cite{DP20}, \cite{PD(UMB)} and~\cite{DPT(Howrigid)}, respectively.
The aim of this section is to give a characterization of the above mentioned classes in terms of special type minimum spanning paths.

\subsection{ Spaces for which $T_X$ is strictly binary.}
Recall that a rooted tree is strictly binary if its every internal node has exactly two children.

\begin{proposition}[\!\cite{DPT}]\label{p10}
Let $(X,d)$ be a finite nonempty ultrametric space. The following conditions are
equivalent.
\begin{itemize}
\item[(i)] $T_X$ is strictly binary.
\item[(ii)] If $Y\subseteq X$ and $|Y|\geqslant 3$, then there exists a Hamilton cycle $C \subseteq (G_{Y},w)$ with exactly two edges of maximal weight.
\item[(iii)] There are no equilateral triangles in $(X,d)$.
\end{itemize}
\end{proposition}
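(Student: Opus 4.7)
The plan is to establish the chain $(i) \Leftrightarrow (iii)$ and $(i) \Leftrightarrow (ii)$, since each individual equivalence admits a clean argument based on the structure of $T_X$.

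For $(i) \Leftrightarrow (iii)$, I would argue in both directions via contrapositive. If $T_X$ has an internal node $v$ with at least three children $v_1, v_2, v_3$, pick any leaves $\{y_j\} \in L(T_{v_j})$; then by Lemma~\ref{l2} the path joining any two of these leaves in $T_X$ passes through $v$, and the maximum label on that path equals $l(v)$, so $d(y_i, y_j) = l(v)$ for each pair, producing an equilateral triangle. Conversely, given any equilateral triangle $\{x, y, z\} \subseteq X$ of common distance $t$, let $v$ be the node of $T_X$ of minimal label with $\{x, y, z\} \subseteq L(T_v)$. Then $l(v) = t$, and if two of $x, y, z$ were contained in a single child of $v$, their mutual distance would be at most that child's strictly smaller label, a contradiction. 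Hence those three points occupy three distinct children of $v$, so $T_X$ is not strictly binary.

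For $(i) \Rightarrow (ii)$, I would induct on $|Y|$. The base case $|Y| = 3$ uses $(iii)$, which is available from $(i)$: the three pairwise distances are not all equal, so by the strong triangle inequality exactly two of them coincide with $\operatorname{diam} Y$, and the unique Hamilton cycle on three vertices gives the claim. For the inductive step, let $v$ be the node of minimal label with $Y \subseteq L(T_v)$, so that $v$ has exactly two children $v_1, v_2$ and both sets $Y_j := Y \cap L(T_{v_j})$ are nonempty. All cross-distances between $Y_1$ and $Y_2$ equal $l(v) = \operatorname{diam} Y$, while all within-part distances are strictly smaller. When $|Y_j| \geq 3$, the inductive hypothesis supplies a Hamilton cycle on $Y_j$ from which deletion of one max-weight edge produces a Hamilton path; smaller $Y_j$ trivially admit Hamilton paths. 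Concatenating the two Hamilton paths by means of two cross-edges yields a Hamilton cycle in $(G_Y, w)$ whose only edges of weight $l(v)$ are precisely those two joining edges.

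For $(ii) \Rightarrow (i)$, I would argue contrapositively: if $T_X$ is not strictly binary, the argument from the first paragraph produces three points with pairwise distance $t$, and taking $Y$ equal to this triple, the only Hamilton cycle of $(G_Y, w)$ is the triangle itself, whose three edges all have weight $t$, violating $(ii)$.

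The main obstacle is the inductive step of $(i) \Rightarrow (ii)$: one has to verify that no extra max-weight edges slip into the assembled cycle, which relies on the strict descent of labels from $v$ to $v_1$ and $v_2$ in the representing tree, and care is needed in the degenerate cases $|Y_j| \in \{1, 2\}$ where no inductive Hamilton cycle on $Y_j$ is available and the path through $Y_j$ must be described directly.
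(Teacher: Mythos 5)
This proposition is imported into the paper from the cited reference \cite{DPT}; the paper itself contains no proof of it, so there is nothing internal to compare your argument against. On its own merits your proof is correct and complete. The two directions of (i)$\Leftrightarrow$(iii) via the least node $v$ with $\{x,y,z\}\subseteq L(T_v)$ are sound: the key facts you use --- that labels strictly decrease along root-to-leaf paths, that the minimal ball containing a set has diameter equal to the diameter of the set, and that two points lie in distinct children of $v$ exactly when their distance equals $l(v)$ --- are all available from Theorem~\ref{t13}, Proposition~\ref{lbpm} and Lemma~\ref{l2}. The induction for (i)$\Rightarrow$(ii) also goes through: minimality of $v$ forces both $Y_1$ and $Y_2$ to be nonempty, every cross edge has weight $l(v)=\operatorname{diam}Y$ while every edge inside $Y_j$ has weight at most $l(v_j)<l(v)$, so the two splicing edges are the only maximal ones regardless of whether $|Y_j|$ is $1$, $2$, or at least $3$; and (ii)$\Rightarrow$(i) via the equilateral triple is immediate since the triangle is the unique Hamilton cycle on three vertices. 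The one point worth tightening is the phrase ``node of minimal label'': since distinct nodes may share a label, you should say ``the minimal (deepest) node $v$ with $\{x,y,z\}\subseteq L(T_v)$,'' i.e., the least common ancestor of the corresponding leaves, which exists and is unique because the balls containing a fixed set form a chain. This is cosmetic; the argument itself is a natural and self-contained proof of the cited result.
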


\begin{proposition}\label{p46}
Let $(X,d)$ be a finite nonempty ultrametric space. Then  for any $P\in \mathfrak{P}_{min}(X)$ with $V(P)=\{x_1,..,x_n\}$ and
$\mathcal{PS}(P)=(s_1,...,s_{n-1})$ the following conditions are equivalent.
\begin{itemize}
\item[(i)] $T_X$ is strictly binary.
\item[(ii)] If $s_i=s_j$, $1\leqslant i<j\leqslant n-1$, then there exists $k$, $i<k<j$, such that $s_k>s_i=s_j$.
\end{itemize}
\end{proposition}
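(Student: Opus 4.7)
The plan is to combine the representing-tree picture with one structural observation about an arbitrary $P\in\mathfrak{P}_{min}(X)$: for every $B\in\mathbf B_X$ the vertices of $B$ appear as a consecutive subpath of $P$. This consecutivity follows from Remark~\ref{r} applied to $P$ itself (a path is its own minimum spanning tree) together with the ultrametric fact that $d(y,z)>\operatorname{diam} B$ whenever $y\in B$ and $z\in X\setminus B$: if some $x_c\notin B$ sat between two vertices $x_a,x_b\in B$, then (\ref{e23}) would yield $d(x_a,x_c)\leqslant d(x_a,x_b)\leqslant\operatorname{diam} B$, contradicting the ultrametric fact.

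For (i)$\Rightarrow$(ii), I would attach to each edge of weight $d$ in $P$ the unique internal node $v\in I(T_X)$ for which $L(T_{v})$ is the smallest ball containing both of its endpoints; necessarily $l(v)=d$. Remark~\ref{r31} produces $|S(v)|-1=1$ edge of weight $d$ per such $v$ when $T_X$ is strictly binary, so this assignment is a bijection, and $s_i=s_j=d$ with $i<j$ must come from distinct nodes $v\ne v'$. Let $u$ be their lowest common ancestor in $T_X$; since labels strictly decrease on root-to-leaf paths, $l(u)>d$, and since $u$ has exactly two children, $L(T_u)=B_1\sqcup B_2$ with the two prescribed edges of weight $d$ sitting in different blocks. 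The consecutivity observation places $B_1$ and $B_2$ as disjoint consecutive subpaths of $P$, so exactly one edge of $P$ connects them, it has weight $l(u)>d$, and it occupies some position $k$ with $i<k<j$, as required.

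For (ii)$\Rightarrow$(i) I would argue contrapositively: if some $v\in I(T_X)$ has $|S(v)|=k\geqslant 3$ with children $c_1,\ldots,c_k$ and sub-balls $B_m=L(T_{c_m})$, then the consecutivity observation arranges the $B_m$ as $k$ disjoint consecutive blocks inside the $L(T_v)$-section of $P$, separated by $k-1\geqslant 2$ edges all of weight $l(v)$ (any cross-block distance equals $l(v)$). Picking two consecutive such crossings at positions $i<j$, every intermediate step stays inside a single block $B_m$, so $s_\ell\leqslant l(c_m)<l(v)=s_i=s_j$ for $i<\ell<j$, and (ii) fails. The main obstacle is verifying the consecutivity observation for arbitrary minimum spanning paths (not merely those produced by Algorithm~3); once that is in place, the LCA argument and the block decomposition are forced by Remark~\ref{r31} and the strict monotonicity of labels along root-to-leaf paths in $T_X$.
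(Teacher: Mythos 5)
Your proof is correct, but it takes a genuinely different route from the paper's. The paper proves both implications by passing through the equilateral-triangle criterion (condition (iii) of Proposition~\ref{p10}): for (i)$\Rightarrow$(ii) it negates (ii) and uses~(\ref{e23}) to exhibit an equilateral triple $(x_i,x_j,x_{j+1})$, and for (ii)$\Rightarrow$(i) it starts from an equilateral triple and locates two maximal-weight edges $e_1,e_2$ on the two subpaths, between which no strictly larger weight can occur. Your argument instead isolates a structural lemma --- every ball of $X$ occupies a consecutive block of any $P\in\mathfrak{P}_{min}(X)$ --- which you correctly derive from~(\ref{e23}) and the ultrametric fact $d(y,z)>\operatorname{diam}B$ for $y\in B$, $z\notin B$; this is essentially the content of Remark~\ref{r55}, which the paper states without proof and does not invoke here. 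With that lemma your LCA argument for (i)$\Rightarrow$(ii) and your block decomposition for (ii)$\Rightarrow$(i) both go through, and the $k\geqslant 3$ case (including the degenerate $j=i+1$ subcase, where (ii) fails vacuously) is handled cleanly. What the paper's route buys is brevity, since Proposition~\ref{p10} is already available; what your route buys is a self-contained picture of how the balls tile the path, which in fact also yields Proposition~\ref{p48} with little extra work. One small point to tighten: in (i)$\Rightarrow$(ii) you declare the edge-to-node assignment a bijection merely because Remark~\ref{r31} makes the two finite sets equinumerous; equal cardinality alone does not give bijectivity. You should add the one-line observation (immediate from your consecutivity lemma) that the two child-blocks of an internal node $v$ meet along exactly one edge of $P$, so each node receives at most one edge; this gives the injectivity you actually need to conclude $v\neq v'$.
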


\begin{proof}
(i)$\Rightarrow$(ii) Let $(X,d)$ be a finite ultrametric space and let $T_X$ be strictly binary. Suppose that (ii) does not hold for some $P\in \mathfrak{P}_{min}(X)$. Consequently, if there exist $i,j$, $i<j$ such that  $s_i=s_j$, then for all $k$ such that $i<k<j$ the inequality $s_k<s_i=s_j$ holds. Suppose first that $i<j-1$ and such $k$ do exists. Consider a triplet of points $(x_{i}, x_j, x_{j+1})$. By~(\ref{e23}) we have the equalities $d(x_{i}, x_j)=d(x_j, x_{j+1})=d(x_{i}, x_{j+1})=s_i$ which contradicts to condition (iii) of Proposition~\ref{p10}. Let now $i=j-1$. Then for the triplet $(x_i,x_j,x_{j+1})$ we have  the same contradiction.

(ii)$\Rightarrow$(i) Conversely, let condition (ii) hold and let $T_X$ be not strictly binary. Consequently, by condition (iii) of Proposition~\ref{p10} for every $(P,w)\in \mathfrak{P}_{min}(X)$ there exist three points $x_i, x_j, x_k \in V(P)$, $i<j<k$ such that $d(x_i,x_j)=d(x_j,x_k)=d(x_i,x_k)$. By~(\ref{e23}) there exists an edge $e_1\in E(P)$ of maximal weight among the edges of the subpath of the path $P$ with vertices $\{x_i,...,x_j\}$. Analogously, there exists an edge $e_2\in E(P)$ with the same property belonging to the subpath with vertices $\{x_j,...,x_k\}$. Obviously, there are only two possibilities: 1) $e_1$ and $e_2$ are adjacent; 2) the weight of every edge which is between $e_1$ and $e_2$ is less or equal to $w(e_1)=w(e_2)$, which contradicts to condition (ii).
\end{proof}

\subsection{Spaces for which $T_X$ has injective internal labeling.}

We shall say that internal labeling of a representing tree $T_X$ is injective if the labels of different internal nodes of $T_X$ are different.

\begin{theorem}[\!~\cite{DP20}]\label{t1}
Let $(X,d)$ be a finite nonempty ultrametric space. The following conditions are equivalent.
\begin{itemize}
\item [(i)] The diameters of different nonsingular balls are different.
\item [(ii)] The internal labeling of $T_X$ is injective.
\item [(iii)] $G'_{t,X}$ is a complete multipartite graph for every $t\in \operatorname{Sp}(X)\setminus \{0\}$.
\item [(iv)] The equality
$$
|\operatorname{Sp}(X)| = |\mathbf{B}_X| - |X| + 1
$$
holds.
\end{itemize}
\end{theorem}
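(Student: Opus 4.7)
The plan is to prove (i)$\Leftrightarrow$(ii) and (ii)$\Leftrightarrow$(iv) as direct translations, then to hinge the rest on (ii)$\Leftrightarrow$(iii), whose harder direction (iii)$\Rightarrow$(ii) will be the main step. For (i)$\Leftrightarrow$(ii), Proposition~\ref{lbpm} supplies a bijection $v\mapsto L(T_v)$ between $V(T_X)$ and $\mathbf B_X$ that sends leaves to singleton balls and internal nodes to nonsingular balls; since $l(v)=\operatorname{diam} L(T_v)$, injectivity of the internal labeling is literally the assertion that distinct nonsingular balls have distinct diameters.

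For (ii)$\Leftrightarrow$(iv), Proposition~\ref{lbpm} gives $|\mathbf B_X|-|X|=|I(T_X)|$ (the $|X|$ leaves of $T_X$ are exactly the singleton balls). Every internal label lies in $\operatorname{Sp}(X)\setminus\{0\}$, and conversely every nonzero $t\in\operatorname{Sp}(X)$ arises as $l(v)$ for some $v\in I(T_X)$ by applying Lemma~\ref{l2} to a pair $\{x\},\{y\}$ with $d(x,y)=t$. Hence the label map $I(T_X)\to\operatorname{Sp}(X)\setminus\{0\}$ is a surjection between sets of cardinalities $|\mathbf B_X|-|X|$ and $|\operatorname{Sp}(X)|-1$, and it is injective if and only if these cardinalities coincide, which is precisely (iv).

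For (ii)$\Rightarrow$(iii), fix $t\in\operatorname{Sp}(X)\setminus\{0\}$ and let $v$ be the unique internal node with $l(v)=t$. By Lemma~\ref{l2} together with the strict decrease of labels along root-to-leaf paths, the equality $d(x,y)=t$ holds if and only if $x,y\in L(T_v)$ lie in distinct direct-child subtrees of $v$. Consequently the set of non-isolated vertices of $G_{t,X}$ equals $L(T_v)$ and $G'_{t,X}$ coincides with the diametrical graph of $L(T_v)$, which is complete multipartite by Theorem~\ref{t13}.

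The main obstacle, (iii)$\Rightarrow$(ii), I would handle by contradiction. Suppose two distinct internal nodes $v_1,v_2$ share a label $t$; the strict decrease of labels along root-to-leaf paths rules out either being an ancestor of the other, so $L(T_{v_1})\cap L(T_{v_2})=\varnothing$. Lemma~\ref{l2} then shows that every cross-distance $d(x,y)$ for $x\in L(T_{v_1})$ and $y\in L(T_{v_2})$ equals the label of a node strictly above both $v_1$ and $v_2$ and therefore exceeds $t$, so $G_{t,X}$ has no edge joining $L(T_{v_1})$ to $L(T_{v_2})$. Picking leaves from two different direct-child subtrees of each $v_i$ produces non-isolated vertices inside both $L(T_{v_1})$ and $L(T_{v_2})$, so $G'_{t,X}$ contains at least two vertices in each of two pieces with no cross-edges, forcing it to be disconnected; but any complete multipartite graph is connected (Definition~\ref{def3.1} with $k\geqslant 2$), a contradiction. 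The subtlety I expect to require the most care is the precise identification of which edges of $G_{t,X}$ live inside which local piece $L(T_{v_i})$ when multiple internal nodes share the label $t$, and the verification that enough non-isolated vertices survive in each piece to witness disconnectedness.
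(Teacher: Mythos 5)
This theorem is imported from \cite{DP20}; the paper states it without reproducing a proof, so there is no in-text argument to compare your route against and I can only assess the proposal on its own terms. Three of your four steps are sound. For (i)$\Leftrightarrow$(ii), the bijection of Proposition~\ref{lbpm} does restrict to a bijection between internal nodes and nonsingular balls, and $l(v)=\operatorname{diam}L(T_v)$ makes the equivalence a tautology. For (ii)$\Leftrightarrow$(iv), the count $|I(T_X)|=|\mathbf B_X|-|X|$ and the surjectivity of $l\colon I(T_X)\to\operatorname{Sp}(X)\setminus\{0\}$ (via Lemma~\ref{l2}) are both correct, and the cardinality comparison finishes it. For (ii)$\Rightarrow$(iii), your identification of the edge set of $G_{t,X}$ with the cross-child pairs of the unique node labeled $t$ rests on the right observation, namely that the maximum in \eqref{e112} is attained at the least common ancestor because labels strictly decrease away from the root; then Theorem~\ref{t13} applies to $L(T_v)$.

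The one genuine weak point is the closing inference of (iii)$\Rightarrow$(ii). Having two vertex sets with non-isolated members and no cross-edges between them does \emph{not} by itself force $G'_{t,X}$ to be disconnected: a path could in principle leave $L(T_{v_1})$, travel through non-isolated vertices lying outside both sets (these exist whenever a third internal node also carries the label $t$), and re-enter near $L(T_{v_2})$. You flagged this as the delicate spot but did not close it. The repair uses exactly the tool you already deployed: for $x\in L(T_{v_1})$ and $z\notin L(T_{v_1})$ the least common ancestor of the leaves $\{x\}$ and $\{z\}$ is a \emph{proper} ancestor of $v_1$, so $d(x,z)>t$; and for $x,z\in L(T_{v_1})$ the only node of $T_{v_1}$ with label $t$ is $v_1$ itself. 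Hence every edge of $G_{t,X}$ incident to a vertex of $L(T_{v_1})$ stays inside $L(T_{v_1})$, so the non-isolated vertices of $L(T_{v_1})$ form a union of connected components of $G'_{t,X}$, and likewise for $L(T_{v_2})$. Since both unions are nonempty and disjoint, $G'_{t,X}$ is disconnected, contradicting the connectedness of a complete multipartite graph. With that strengthening the argument is complete.
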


\begin{proposition}\label{p48}
Let $(X,d)$ be a finite nonempty ultrametric space. Then  for any $P\in \mathfrak{P}_{min}(X)$ with $V(P)=\{x_1,..,x_n\}$ and
$\mathcal{PS}(P)=(s_1,...,s_{n-1})$ the following conditions are equivalent.
\begin{itemize}
\item[(i)] The labels of different internal nodes of $T_X$ are different.
\item[(ii)] If $s_i=s_j$, $1\leqslant i < j \leqslant n-1$, then the inequality $s_k\leqslant s_i=s_j$ holds for every $k$ such that $i<k<j$.
\end{itemize}
\end{proposition}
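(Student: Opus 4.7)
The plan rests on one structural fact implicit in Remark~\ref{r55}: for any $P\in\mathfrak{P}_{\min}(X)$ and any ball $B\in\mathbf{B}_X$, the points of $B$ occupy a block $x_p,x_{p+1},\dots,x_q$ of consecutive vertices of $P$. (Indeed, for $x,y\in B$ the $P$-path between them has maximum weight $d(x,y)\leqslant\operatorname{diam} B$, so by~(\ref{e23}) every intermediate vertex $z$ satisfies $d(x,z)\leqslant\operatorname{diam} B$ and thus lies in $B$.) I also use the standard observation, coming from Procedure~\ref{p1} and Lemma~\ref{l2}, that for every $m\in\{1,\dots,n-1\}$ the weight $s_m=d(x_m,x_{m+1})$ equals $l(v_m)$, where $v_m$ is the node of $T_X$ whose leaves form the smallest ball containing both $x_m$ and $x_{m+1}$; in particular $s_m=\operatorname{diam} L(T_{v_m})$.

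For (i)$\Rightarrow$(ii), I would assume the internal labeling of $T_X$ is injective and take $s_i=s_j$ with $i<j$. Then $l(v_i)=l(v_j)$, and injectivity forces $v_i=v_j$; write $B:=L(T_{v_i})$, so $\operatorname{diam} B=s_i$ and $x_i,x_{i+1},x_j,x_{j+1}\in B$. The consecutive-block property applied to $B$ then forces every $x_m$ with $i\leqslant m\leqslant j+1$ to lie in $B$, and hence for each $k$ with $i<k<j$ one gets $s_k=d(x_k,x_{k+1})\leqslant\operatorname{diam} B=s_i$, as required.

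For the converse I would argue by contrapositive. Suppose $T_X$ has two distinct internal nodes $v\neq v'$ with common label $t$, and set $B:=L(T_v)$, $B':=L(T_{v'})$; by Proposition~\ref{lbpm} these are distinct balls, and since in an ultrametric space two balls are either nested or disjoint, while two distinct balls of the same diameter cannot be nested, we conclude $B\cap B'=\varnothing$. Fix any $P\in\mathfrak{P}_{\min}(X)$ and let $B$, $B'$ occupy the disjoint consecutive blocks $\{x_p,\dots,x_q\}$ and $\{x_{p'},\dots,x_{q'}\}$, say with $q<p'$. Inside the $B$-block, each direct sub-ball of $v$ is itself a ball and therefore forms a consecutive sub-block; since $|S(v)|\geqslant 2$, at least one transition between two different sub-blocks occurs, yielding an index $i\in\{p,\dots,q-1\}$ with $s_i=t$. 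The same argument applied to $v'$ yields $j\in\{p',\dots,q'-1\}$ with $s_j=t$. Taking $k:=q$ I would check that $i<k<j$, and that $s_k=d(x_q,x_{q+1})>t$, because $x_q\in B$ and $x_{q+1}\notin B$ give $d(x_q,x_{q+1})>\operatorname{diam} B=t$ by the strong triangle inequality. This violates (ii) for $P$, so (ii) fails for every minimum spanning path.

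The main obstacle is the consecutive-block fact itself and its quantitative refinements (namely that a transition between sibling sub-balls of $v$ contributes a weight-$t$ edge, and that a transition from $B$ to $X\setminus B$ contributes a weight-$(>t)$ edge); once these are cleanly in hand, both directions reduce to index bookkeeping. Remark~\ref{r55} supplies essentially all of this, so the remainder of the proof is routine.
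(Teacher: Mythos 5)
Your argument is correct, and it rests on a different backbone than the paper's proof. The paper proves (i)$\Rightarrow$(ii) by contradiction: assuming a violating index $k$ exists, it uses the ball-recognition criterion of Remark~\ref{r55} (with the index bookkeeping $k_1,k_2,l_1,l_2$) to exhibit two distinct nonsingular balls of equal diameter, contradicting condition (i) of Theorem~\ref{t1}; you instead argue directly, identifying $s_m$ with the label of the least common ancestor of $\{x_m\}$ and $\{x_{m+1}\}$, so that injectivity forces the two ancestors to coincide, after which the consecutive-block property of balls finishes the job. For (ii)$\Rightarrow$(i) the paper takes a path produced by Algorithm~3, reads off the indices $i,j$ from the algorithm's behaviour at the two equally labeled nodes, and then invokes Lemma~\ref{l2} and Remark~\ref{r}; you run the same contrapositive on an \emph{arbitrary} minimum spanning path, using that each ball and each sibling sub-ball occupies a consecutive block, and take $k=q$ at the right end of the first block. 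Your version of this direction is actually the more faithful one to the statement as quantified (``for any $P\in\mathfrak{P}_{min}(X)$''), since the paper exhibits the failure of (ii) only on one specially constructed path, whereas you show it fails on every minimum spanning path. The price is that the consecutive-block property must be established; your derivation of it from~(\ref{e23}) (every intermediate vertex $z$ of the subpath satisfies $d(x,z)\leqslant d(x,y)\leqslant \operatorname{diam} B$ and hence lies in $B$) is sound, and it is essentially the content the paper delegates to Remark~\ref{r55}.
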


\begin{proof}
(i)$\Rightarrow$(ii) Let $(X,d)$ be a finite ultrametric space and let all the labels of different internal nodes of $T_X$ are different. Suppose that (ii) does not hold for some $P\in \mathfrak{P}_{min}(X)$. Consequently, if there exist $i,j$, $i<j$ such that  $s_i=s_j$, then there exists at least one $k$ such that $i<k<j$ and the inequality $s_k>s_i=s_j$ holds. Let $k_1$ be the smallest integer such that $i<k_1$ and $s_i<s_{k_1}$. Analogously, let $k_2$ be the largest integer such that $k_2<j$ and $s_j<s_{k_2}$. (Possibly $k_1=k_2$.)
Let also $l_1$ be the largest integer such that $l_1<i$ and $s_{l_1}>s_{i}$. Analogously, let $l_2$ be the smallest integer such that $j<l_2$ and $s_j<s_{l_2}$. By Remark~\ref{r55} the sets
$\{x_{l_1+1},...,x_i,...,x_{k_1}\}$ and $\{x_{k_2+1},...,x_j,...,x_{l_2}\}$ are different nonsingular balls in $X$ with the equal diameters $s_i=s_j$ which contradicts to condition (i) of Theorem~\ref{t1}.
Note also that if there is no $l_1$ ($l_2$) with the above described properties, then the set $\{x_{1},...,x_i,...,x_{k_1}\}$ ($\{x_{k_2+1},...,x_j,...,x_{n}\}$) will form the respective desired ball.

(ii)$\Rightarrow$(i) Let condition (ii) hold. Suppose that there are two different internal nodes $v_1$ and $v_2$ with $l(v_1)=l(v_2)$. Since labels on representing trees strictly decrease from the root to any vertex we have that $v_1$ is not a successor of $v_2$ and $v_2$ is not a successor of $v_1$.  Let $P\in \mathfrak{P}_{min}(X)$ with $V(P)=\{x_1,...,x_n\}$ and $\mathcal{PS}(P)=(s_1,...,s_{n-1})$ be any path constructed using Algorithm 3. Hence there exist $i,j$, $i<j-1$, such that $d(x_i, x_{i+1})=d(x_j, x_{j+1})$, (i.e., $s_i=s_j$) where $x_i,x_{i+1}$ and $x_j, x_{j+1}$ are successors of $v_1$ and $v_2$, respectively. By Lemma~\ref{l2} we have $d(x_{i+1}, x_{j})>l(v_1)=l(v_2)$. According to Remark~\ref{r} there exists $k$, $i<k<j$, such that $s_k>s_i=s_j$ which contradicts to condition (ii).
\end{proof}

\subsection{Spaces extremal for the Gomory-Hu inequality.}
In 1961 E.\,C.~Gomory and T.\,C.~Hu \cite{GomoryHu(1961)} for arbitrary finite ultrametric space $X$ proved the inequality \mbox{$|\operatorname{Sp}(X)| \leqslant |X|$}. Define by $\mathfrak{U}$ the class of finite ultrametric spaces $X$ with $|\operatorname{Sp}(X)| = |X|$.
There exist descriptions of $X \in \mathfrak{U}$ in terms of graphs $G'_{r,X}$~\cite{PD(UMB), DP18}, in terms of representing trees~\cite{PD(UMB)} and in terms of weighted Hamiltonian cycles and weighted Hamiltonian paths~\cite{DPT}.

Below we need the following criterion.
\begin{theorem}[\!\cite{PD(UMB)}]\label{t22}
Let $(X, d)$ be a finite ultrametric space with $|X| \geqslant 2$. The following conditions are equivalent.
\begin{itemize}
  \item [(i)] $(X, d) \in \mathfrak U$.
  \item [(iii)] $T_X$ is strictly binary and the labels of different internal nodes are different.
\end{itemize}
\end{theorem}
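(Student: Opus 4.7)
The plan is to prove the equivalence by a clean double-counting argument that sandwiches $|\operatorname{Sp}(X)|$ between $|X|$ and a lower quantity, so that equality in the Gomory--Hu inequality forces equality in two auxiliary inequalities, each of which is precisely one of the two structural conditions on $T_X$.

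First I would record the link between $\operatorname{Sp}(X)$ and the internal labeling of $T_X$. By Procedure~\ref{p1} every internal label $l(v)$ is a diameter of some non-singleton ball, hence lies in $\operatorname{Sp}(X)\setminus\{0\}$; conversely, for each nonzero $t\in\operatorname{Sp}(X)$ there exist distinct $x,y\in X$ with $d(x,y)=t$, and by Lemma~\ref{l2} the maximum label on the $T_X$-path joining $\{x\}$ and $\{y\}$ equals $t$, so $t$ is the label of some internal node. Therefore the set of nonzero elements of $\operatorname{Sp}(X)$ coincides with the image of $l\vert_{I(T_X)}$, which gives
$$
|\operatorname{Sp}(X)| - 1 \;=\; \bigl|\{l(v):v\in I(T_X)\}\bigr| \;\leqslant\; |I(T_X)|,
$$
with equality if and only if the internal labeling of $T_X$ is injective.

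Next I would bound $|I(T_X)|$ in terms of $|X|$. By Procedure~\ref{p1} every internal node of $T_X$ has $k\geqslant 2$ direct successors, so for a rooted tree with this property an easy induction on the height gives
$$
|\overline L_{T_X}| \;=\; 1 + \sum_{v\in I(T_X)}\bigl(|S(v)|-1\bigr) \;\geqslant\; 1 + |I(T_X)|,
$$
with equality exactly when $|S(v)|=2$ for all $v\in I(T_X)$, i.e.\ when $T_X$ is strictly binary. Since $|\overline L_{T_X}|=|X|$, this yields $|I(T_X)|\leqslant |X|-1$, with equality iff $T_X$ is strictly binary.

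Combining the two inequalities gives
$$
|\operatorname{Sp}(X)| \;\leqslant\; |I(T_X)|+1 \;\leqslant\; |X|,
$$
which incidentally reproves the Gomory--Hu bound. The condition $(X,d)\in\mathfrak U$, namely $|\operatorname{Sp}(X)|=|X|$, forces equality at both steps, hence $T_X$ is strictly binary and its internal labeling is injective; conversely, if both of these hold, the two equalities give $|\operatorname{Sp}(X)|=|X|$. There is no real obstacle here: the only point requiring any care is to verify that the map $v\mapsto l(v)$ on $I(T_X)$ indeed surjects onto $\operatorname{Sp}(X)\setminus\{0\}$, which is exactly what Lemma~\ref{l2} provides.
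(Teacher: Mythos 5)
The paper does not prove Theorem~\ref{t22}; it is quoted from \cite{PD(UMB)} without argument, so there is no in-paper proof to compare against. Your proposal is a correct, self-contained proof. Both links in the chain check out: the surjectivity of $l\vert_{I(T_X)}$ onto $\operatorname{Sp}(X)\setminus\{0\}$ follows from Lemma~\ref{l2} together with the fact that each internal label is the (attained) diameter of a nonsingular ball, and the leaf-counting identity $|\overline L_{T_X}|=1+\sum_{v\in I(T_X)}(|S(v)|-1)$ is the standard node count for rooted trees (and is consistent with the paper's own Remark~\ref{r31}, where summing \eqref{e4} over all $d_i$ gives the $|X|-1$ edges of a spanning tree). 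The sandwich $|\operatorname{Sp}(X)|\leqslant |I(T_X)|+1\leqslant |X|$ then correctly forces both equalities exactly when $X\in\mathfrak U$, and each equality is precisely one of the two structural conditions. A pleasant bonus of your route is that it reproves the Gomory--Hu inequality $|\operatorname{Sp}(X)|\leqslant|X|$ along the way, and it isolates cleanly which of the two conditions fails when only one of the two inequalities is strict (this is essentially the content of Theorem~\ref{t1}(iv), since $|\mathbf B_X|=|V(T_X)|=|I(T_X)|+|X|$ by Proposition~\ref{lbpm}). The only points needing care --- that $0\in\operatorname{Sp}(X)$ accounts for the ``$-1$'', that internal labels are strictly positive, and that $|\overline L_{T_X}|=|X|$ because the leaves are exactly the singletons --- are all immediate from Procedure~\ref{p1}.
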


\begin{proposition}\label{p44}
Let $(X,d)$ be a finite nonempty ultrametric space. The following conditions are
equivalent.
\begin{itemize}
\item[(i)] $X \in \mathfrak{U}$.
\item[(ii)] All elements of $\mathcal{PS}(P)$ are different for every $P\in \mathfrak{P}_{min}(X)$.
\end{itemize}
\end{proposition}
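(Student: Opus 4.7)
The plan is to deduce this proposition directly from the conjunction of Propositions~\ref{p46} and~\ref{p48}, using Theorem~\ref{t22} as the bridge between $\mathfrak{U}$ and tree properties. Theorem~\ref{t22} tells us that $X\in\mathfrak{U}$ is equivalent to the conjunction: $T_X$ is strictly binary \emph{and} $T_X$ has injective internal labeling. So proving the proposition reduces to showing that condition (ii) here is equivalent to the conjunction of the characterizing conditions (ii) from Propositions~\ref{p46} and~\ref{p48}.

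For the direction (i)$\Rightarrow$(ii), I would fix an arbitrary $P\in\mathfrak{P}_{min}(X)$ with path spectrum $(s_1,\ldots,s_{n-1})$ and argue by contradiction. Assuming some $s_i=s_j$ with $i<j$, Proposition~\ref{p46} (applied via the strictly binary property) produces an index $k$ with $i<k<j$ such that $s_k>s_i=s_j$, while Proposition~\ref{p48} (applied via injective internal labeling) forces $s_l\leqslant s_i=s_j$ for every $l$ with $i<l<j$, in particular for $l=k$. These are incompatible, so all entries of $\mathcal{PS}(P)$ must be distinct.

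For (ii)$\Rightarrow$(i), the observation is that if all entries of $\mathcal{PS}(P)$ are distinct for every minimum spanning path $P$, then the premise ``$s_i=s_j$ with $i<j$'' in conditions (ii) of Propositions~\ref{p46} and~\ref{p48} is never satisfied, so both those conditions are vacuously true. Consequently $T_X$ is strictly binary and the internal labeling of $T_X$ is injective, and Theorem~\ref{t22} then yields $X\in\mathfrak{U}$.

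There is no real obstacle here: the two ingredients do the work, and the only mild care needed is to note that the hypothesis in (ii) is required for \emph{every} minimum spanning path $P$, which matches the quantification in the two auxiliary propositions (each of which states an equivalence ``for any $P\in\mathfrak{P}_{min}(X)$''). So the forward direction can freely pick any such $P$, and the backward direction's vacuous-premise argument applies uniformly across all $P\in\mathfrak{P}_{min}(X)$.
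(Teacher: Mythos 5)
Your proposal is correct and follows essentially the same route as the paper: both reduce the statement to Theorem~\ref{t22} combined with the equivalences of Propositions~\ref{p46} and~\ref{p48}, observing that the conjunction of their conditions (ii) is exactly the distinctness of all entries of $\mathcal{PS}(P)$. Your write-up merely spells out the incompatibility of the two conditions (ii) under an assumed repetition $s_i=s_j$, which the paper leaves implicit.
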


\begin{proof}
According to Theorem~\ref{t22} condition (i) of this proposition is equivalent to the intersection of conditions (i) of Proposition~\ref{p46} and (i) of Proposition~\ref{p48}. This in turn is equivalent to the intersection of conditions (ii) of Proposition~\ref{p46} and (ii) of Proposition~\ref{p48}. The last two conditions give condition (ii) of this proposition. This completes the proof.
\end{proof}

\subsection{Ultrametric spaces which are as rigid as possible.}

Let $(X, d)$ be a metric space and let $\operatorname{Iso}(X)$ be the group of isometries of $(X, d)$. For every self-map $f\colon X\to X$ we denote by $\operatorname{Fix}(f)$ the set of fixed points of $f$.
The finite ultrametric spaces satisfying the equality
\begin{equation*}
\min_{g \in \operatorname{Iso}(X)} |\operatorname{Fix}(g)| = |X|-2,
\end{equation*}
are called as rigid as possible, see~\cite{DPT(Howrigid)} for detailed definitions. Denote by $\mathfrak{R}$ this class of spaces. The following theorem gives us a characterization of spaces from the class $\mathfrak{R}$ in terms of representing trees.

\begin{theorem}[\!~\cite{DPT(Howrigid)}]\label{t38}
Let $(X, d)$ be a finite ultrametric space with $|X| \geq 2$. Then the following
statements are equivalent.
\begin{itemize}
\item [(i)] $(X, d) \in \mathfrak{R}$.
\item [(ii)] $T_X$ is strictly binary with exactly one inner node at each level except the last level.
\end{itemize}
\end{theorem}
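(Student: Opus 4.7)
The plan is to exploit the bijective correspondence between isometries of $(X,d)$ and label-preserving rooted automorphisms of $T_X$: the fixed points of an isometry $g$ are exactly the leaves of $T_X$ fixed by the corresponding automorphism. Hence the equality $\min_{g}|\operatorname{Fix}(g)|=|X|-2$ says that every non-identity automorphism of $T_X$ moves exactly two leaves. I will also invoke the elementary fact that any finite ultrametric space with at least two points admits a non-trivial isometry: at any deepest inner node of the representing tree all children are leaves, and transposing two of them gives an isometry.

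For (ii)$\Rightarrow$(i), take $T_X$ as in (ii) and let $\phi$ be a label-preserving automorphism. At each non-terminal level the unique inner node $v$ is automatically fixed by $\phi$, and its two children consist of a leaf and either a strictly larger inner subtree or, at the deepest level, another leaf. Since a leaf and an inner node cannot be interchanged, descending through the levels forces $\phi$ to fix every internal node and every \emph{side leaf}; only the two leaf children of the deepest inner node may be swapped. Thus $\phi$ is either the identity or a transposition of those two leaves, giving $\min_{g}|\operatorname{Fix}(g)|=|X|-2$.

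For (i)$\Rightarrow$(ii), I would argue by contrapositive: if $T_X$ is not of the described caterpillar shape, then $X$ admits an isometry fixing fewer than $|X|-2$ points. First, $T_X$ must be strictly binary. If some inner node $v$ had $k\geqslant 3$ children, one of the following forbidden configurations arises: two child subtrees are isomorphic labeled trees each containing $\geqslant 2$ leaves, whose swap moves $\geqslant 4$ leaves; or $v$ has $\geqslant 3$ leaf children, so two overlapping transpositions compose to a $3$-cycle fixing only $|X|-3$ leaves; or $v$ has two leaf children together with an inner child $w$, in which case the non-trivial isometry of $T_w$ (from the elementary fact above) composes with the leaf-swap at $v$ to move at least four leaves. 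Second, assuming $T_X$ is strictly binary, I claim that at each non-terminal level there is exactly one inner node. If some inner node had two inner children, each of the two subtrees would carry its own non-trivial isometry; extending both by the identity and composing would yield an isometry of $X$ fixing $\leqslant |X|-4$ points. Applying this to the root and then iterating along the unique inner child (which, as a ball of $X$, inherits $\mathfrak{R}$-membership by extending its isometries with the identity) produces the caterpillar.

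The main obstacle will be the case analysis in (i)$\Rightarrow$(ii), particularly the dismissal of inner nodes of degree $\geqslant 3$: one must enumerate the possible combinations of leaf and inner children of such a node and, in each subcase, exhibit an explicit isometry of $X$ moving at least four points. Once strict binariness is secured, the caterpillar property follows by a straightforward downward induction on the height of $T_X$.
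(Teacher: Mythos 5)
The paper does not prove Theorem~\ref{t38}; it is quoted verbatim from \cite{DPT(Howrigid)}, so there is no internal proof to compare against and your argument must stand on its own. Your route is the natural (and, as far as the cited source goes, the standard) one: identify $\operatorname{Iso}(X)$ with the label-preserving rooted automorphisms of $T_X$, note that a leaf (label $0$) can never be exchanged with an internal node (positive label), and count moved leaves; both implications as you sketch them are sound, including the preliminary fact that every space with $|X|\geqslant 2$ admits an isometry moving exactly two points, which guarantees $\min_g|\operatorname{Fix}(g)|\leqslant |X|-2$ unconditionally. The one place to tighten is the case enumeration in (i)$\Rightarrow$(ii) for an internal node with $k\geqslant 3$ children: your explicit list (isomorphic sibling subtrees, at least three leaf children, two leaf children plus an inner child) omits the subcase of at most one leaf child together with at least two inner children. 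That subcase is killed by exactly the composition argument you only state later for the strictly binary situation --- two nontrivial isometries of disjoint child balls, each extended by the identity, compose to an isometry moving at least four points --- so it is cleaner to prove that lemma once for arbitrary arity; the ``isomorphic sibling subtrees'' configuration is then superfluous (and in any case such a pair need not exist, so it cannot carry a case of the analysis).
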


\begin{proposition}\label{p410x}
Let $(X,d)$ be a finite nonempty ultrametric space. The following conditions are equivalent.
\begin{itemize}
\item[(i)] $(X, d) \in \mathfrak{R}$.
\item[(ii)] There exists a path $P\in \mathfrak{P}_{min}(X)$ such that the sequence $\mathcal{PS}(P)=(s_1,...,s_{n-1})$ is strictly monotone.
\end{itemize}
\end{proposition}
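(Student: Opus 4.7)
The plan is to combine Theorem~\ref{t38}, which gives a structural description of $T_X$ for $X\in\mathfrak{R}$, with Remark~\ref{r55}, which translates properties of $\mathcal{PS}(P)$ into statements about the ball structure of $X$.

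For (i)$\Rightarrow$(ii), under the hypothesis Theorem~\ref{t38} forces $T_X$ to be a caterpillar: a chain of internal nodes $r_0,r_1,\ldots,r_m$ in which $r_0$ is the root, $r_{j+1}$ is the unique internal child of $r_j$ for $j<m$, every $r_j$ with $j<m$ has a single leaf child besides $r_{j+1}$, and $r_m$ has two leaf children. I will run Algorithm~3 starting at one of the leaves of $r_m$. Since $L(T_{r_m})$ consists of exactly those two leaves, step (i) is forced to pick $v_1=r_m$, after which step (ii) exhausts $L(T_{r_m})$; consequently the next application of step (i) is forced to take $v_2=r_{m-1}$ (the unique candidate whose subtree still contains an unvisited leaf), and so on. A short induction yields $v_i=r_{m-i+1}$ for $i=1,\ldots,n-1$, and hence $\mathcal{PS}(P)=(l(r_m),l(r_{m-1}),\ldots,l(r_0))$. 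Since labels strictly decrease along root-to-leaf paths (see Remark~\ref{r23}), this sequence is strictly increasing.

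For (ii)$\Rightarrow$(i), let $P\in\mathfrak{P}_{min}(X)$ have strictly monotone spectrum. Reversing the enumeration of $V(P)$ if necessary, I may assume $s_1<s_2<\cdots<s_{n-1}$; the reverse enumeration yields another minimum spanning path with the reversed spectrum. Pairwise distinctness of the $s_i$ combined with Proposition~\ref{p44} gives $X\in\mathfrak{U}$, and Theorem~\ref{t22} then implies $T_X$ is strictly binary. A strictly binary tree with $n$ leaves has $2n-1$ nodes, so Proposition~\ref{lbpm} gives $|\mathbf{B}_X|=2n-1$, whence $X$ has exactly $n-1$ nonsingular balls. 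Applying Remark~\ref{r55} to the subsequences $(x_1,\ldots,x_j)$ for $j=2,\ldots,n$: with $i=1$ only the inequality $s_l<s_j$ for $l<j$ is required, and this is exactly the strict-increase relation. Thus each $\{x_1,\ldots,x_j\}$ is a nonsingular ball, giving $n-1$ nested nonsingular balls, which therefore must be all of them. By Proposition~\ref{lbpm} their corresponding internal nodes in $T_X$ form a single ancestor-descendant chain of length $n-1$, occupying $n-1$ consecutive levels with exactly one internal node each; this matches condition (ii) of Theorem~\ref{t38}, giving $X\in\mathfrak{R}$.

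The delicate points are the endpoint case of Remark~\ref{r55} (using $i=1$ to drop the left-side inequality) and the tightness of the counting argument: since a strictly binary $T_X$ has exactly $2n-1$ nodes, the $n$ singletons plus the $n-1$ nested balls leave no room for any other nonsingular ball, so the internal-node chain really is the full internal structure of $T_X$. Apart from this bookkeeping, both implications are direct consequences of the results already established in the paper.
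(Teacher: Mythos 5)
Your proof is correct and follows the route the paper itself intends: the paper's own proof is a single sentence deferring everything to the caterpillar structure of $T_X$ given by condition (ii) of Theorem~\ref{t38}, and you have supplied the missing details (running Algorithm~3 from a deepest leaf for one direction, and a ball-counting argument via Remark~\ref{r55} and Proposition~\ref{lbpm} for the other). One small imprecision in (ii)$\Rightarrow$(i): you invoke Proposition~\ref{p44}, but its condition (ii) is quantified over \emph{every} $P\in\mathfrak{P}_{min}(X)$, whereas you only know that \emph{one} path has pairwise distinct weights. This is harmless but needs a line: by Remark~\ref{r31} all minimum spanning trees of $(X,d)$ share the same weight multiset, so distinctness for one minimum spanning path gives it for all; alternatively, the $n-1$ distinct nonzero values $s_i$ already force $|\operatorname{Sp}(X)|=|X|$ by the Gomory--Hu inequality, hence $X\in\mathfrak U$ directly, bypassing Proposition~\ref{p44} altogether.
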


\begin{proof}
The proof easily follows from the structure of representing tree $T_X$ described by condition (ii) of Theorem~\ref{t38}.
\end{proof}

\begin{proposition}\label{p41}
Let $X$ be an ultrametric space. Any minimum spanning tree $T_{\min}$ of the space $X$ is a path if and only if the representing tree $T_X$ is isomorphic as rooted tree to one of the rooted trees $T_1,..., T_5$ depicted in Figure~\ref{fig1}.
\end{proposition}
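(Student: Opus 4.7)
The plan is to use Algorithm~1 to translate the property ``every minimum spanning tree of $X$ is a path'' into combinatorial restrictions on the shape of $T_X$. Recall that by Algorithm~1, an MST of $(X,d)$ is obtained by choosing, at every internal node $v$ of $T_X$ with children $X_1,\ldots,X_k$, an arbitrary spanning tree of the contracted complete graph $K_k$ together with, for each chosen inter-part edge, an arbitrary pair of vertices $(u_i,u_j)\in X_i\times X_j$, and, independently, an MST of each part $X_i$. Reading off Figure~\ref{fig1}, the five candidate shapes of $T_X$ are: a single vertex; a root with two leaf children; a root with three leaf children; a root with one leaf and one (root-with-two-leaves) child; and a root with two (root-with-two-leaves) children (corresponding to $|X|=1,2,3,3,4$ respectively).

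For sufficiency I would check each shape directly. The first two cases are immediate; the third uses that every spanning tree of $K_3$ is a $2$-edge path; in the last two the subtrees contribute one or two single edges, and any top-level edge extends them uniquely to a path on $3$ or $4$ vertices.

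For necessity, assume that every MST of $X$ is a path, and establish three structural claims about $T_X$. \emph{(a)} No internal node has $\geqslant 4$ children; otherwise a star spanning tree of $K_k$ is a valid Algorithm~1 choice, producing an MST with a vertex of degree at least $3$. \emph{(b)} An internal node with exactly $3$ children has only leaf children; otherwise a part $X_i$ with $|X_i|\geqslant 2$ may be placed at the degree-$2$ middle of a $K_3$-spanning tree with both incident external edges touching the same vertex of $X_i$, giving that vertex degree at least $3$ in the resulting MST. \emph{(c)} An internal node with exactly $2$ children $X_1,X_2$ satisfies $|X_1|,|X_2|\leqslant 2$; otherwise, say $|X_1|\geqslant 3$, then either some MST $M_1$ of $X_1$ fails to be a path (and we are done by inclusion), or every MST of $X_1$ is a path on $\geqslant 3$ vertices, whence attaching the top-level edge at an interior vertex $p$ of such a path forces $\deg p\geqslant 3$. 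Combining (a)--(c) with the uniqueness of the representing tree on $\leqslant 2$ points forces $T_X$ into exactly one of the five listed shapes.

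The main subtlety is (c): the offending MST must actually be a legitimate output of Algorithm~1 applied to the full space $X$, not merely an arbitrary subgraph of $(G_X,w)$. Splitting on whether every MST of $X_1$ is a path handles both possibilities uniformly, in each case producing a concrete MST of $X$ containing a vertex of degree $\geqslant 3$; since any such MST of $X$ contains that bad vertex as well (the degree in the whole MST is at least the degree contributed by $v$'s subtree), it cannot be a path, contradicting the assumption.
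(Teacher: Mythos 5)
Your proposal is correct and follows essentially the same route as the paper: both arguments run Algorithm~1 and constrain the number of parts of $G_{D,X}$ and their sizes by exhibiting a legitimate choice of edges that creates a vertex of degree at least $3$ whenever the shape of $T_X$ is not among $T_1,\ldots,T_5$. You simply carry out in full the steps the paper compresses or leaves to the reader (the three-part case $T_4$, the sufficiency check, and the justification that the offending tree is genuinely an output of Algorithm~1 on all of $X$), so no further comparison is needed.
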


\begin{proof}
 For the proof of this proposition refer to Algorithm 1. Let for the space $(X,d)$ the diametrical graph $G_{D,X}[X_1, X_2]$ be bipartite. It is easy to see that in order to avoid a construction of a minimum spanning tree which is not a path every part $X_1$ and $X_2$ must contain no more than two points. This observation describes cases $T_1,...,T_3$. The case when $G_{D,X}$ has four parts and more is not admissible since there exists a four-point tree which is not a path. The case $T_5$ is trivial and the case $T_4$ is left to the reader.
\end{proof}

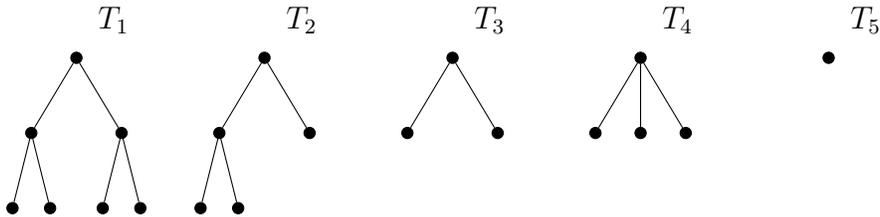
\begin{figure}[htb]
\begin{tikzpicture}[scale=1]
\tikzstyle{level 1}=[level distance=10mm,sibling distance=1.2cm]
\tikzstyle{level 2}=[level distance=10mm,sibling distance=0.5cm]
\tikzstyle{level 3}=[level distance=10mm,sibling distance=.5cm]
\tikzset{solid node/.style={circle,draw,inner sep=1.5pt,fill=black}}

\node at (0,0.5) [label=right:\(T_1\)] {};
\node at (0,0) [solid node] {}
child{node [solid node] {}
	child{node [solid node] {}}
	child{node [solid node] {}}
}
child{node [solid node] {}
	child{node [solid node] {}}
	child{node [solid node] {}}
};

\node at (2.5,0.5) [label=right:\(T_2\)] {};
\node at (2.5,0) [solid node] {}
child{node [solid node] {}
	child{node [solid node] {}}
	child{node [solid node] {}}
}
child{node [solid node] {}
};

\node at (5,0.5) [label=right:\(T_3\)] {};
\node at (5,0) [solid node] {}
child{node [solid node] {}
}
child{node [solid node] {}
};

\tikzstyle{level 1}=[level distance=10mm,sibling distance=0.6cm]
\node at (7.5,0.5) [label=right:\(T_4\)] {};
\node at (7.5,0) [solid node] {}
child{node [solid node] {}
}
child{node [solid node] {}
}
child{node [solid node] {}
};

\node at (10,0.5) [label=right:\(T_5\)] {};
\node at (10,0) [solid node] {};

\end{tikzpicture}
\caption{Rooted representing trees.}
\label{fig1}
\end{figure}

\textbf{Open problem.}
By Remark~\ref{r}, every finite ultrametric space $(X,d)$ is defined by any minimum spanning path  $P\in \mathfrak{P}_{min}(X)$. In other words any finite ultrametric space $X$ with $|X|=n$ is defined by the sequence $(s_1,...,s_{n-1})$, $s_i\in \operatorname{Sp}(X)\setminus \{0\}$. Moreover, such sequence is not unique. It is clear that it is not possible to reconstruct the whole space $X$ having only its spectrum $\operatorname{Sp}(X)$. This indeterminacy hints to the following question: what can we say about the space $X$ or its representing tree $T_X$ knowing only its spectrum $\operatorname{Sp}(X)$?
For example, by Theorem~\ref{t22} the extremal case $|\operatorname{Sp}(X)|=|X|$ allows us to establish some properties of $T_X$.
More strict variation: what can we say about the space $X$ knowing only its multispectrum? Under \emph{multispectrum} of $(X,d)$ we understand the following set $\{(d_i, k_i) \,  |  \, d_i \in \operatorname{Sp}(X)\}$, where $k_i$ is the number of times when $d_i$ appears among values of the ultrametric $d$. Such formulation of the problem is similar to the one of the tasks of spectral graph theory: study of the  properties of a graph in relationship to the eigenvalues of matrices associated with the graph.

\section{Hausdorff distance and minimum spanning paths}\label{HDandMST}
The main result of this section is Theorem~\ref{t44} which gives us an explicit formula for Hausdorff distance between two subsets of a finite ultrametric space. The idea is the following.  First, given two subsets $X$ and $Y$ of the finite ultrametric space $Z$ we define a set of balls $\mathfrak{B}_{XY} \subseteq \mathbf{B}_Z$ depending on these subsets, see Definition~\ref{d21}. Then in Theorem~\ref{t44} we show that the Hausdorff distance  $d_H(X,Y)$ is equal to the maximal diameter of balls from $\mathfrak{B}_{XY}$.

Let us recall the necessary definitions. Let $(Z,d)$ be a finite  ultrametric space. For any two subsets $X$ and $Y$ of $Z$
the distance between $X$ and $Y$ is
$$
\operatorname{dist}(X, Y) = \min \{d(x, y)\colon x \in X , y \in Y\},
$$
in particular, for $x\in Z$, $\operatorname{dist}(x, X) = \operatorname{dist} (\{x\}, X)$. For a set $A \subset Z$ and $\varepsilon >0$, its $\varepsilon$-neighborhood is the set
$$ U_{\varepsilon} (A) = \{ x \in X\colon
\operatorname{dist} (x, A) < \varepsilon \}.$$

The Hausdorff distance between $X$ and $Y$ is defined by
\begin{equation}\label{e41}
d_{H} (X, Y) = \inf \{\varepsilon > 0 \colon X \subset
U_{\varepsilon} (Y) \ \text{ and } \ Y\subset U_{\varepsilon} (X) \}.
\end{equation}

Recall that in ultrametric space every ball is a union of disjoint balls.
For finite ultrametric spaces $X$ this fact easily follows from the construction of representing tree $T_X$ and Proposition~1.7. In particular, Proposition~1.7 and Lemma~1.8 imply that for every nonsingular ball $B \in \mathbf{B}_X$ the diametrical graph $G_{D,B}=G_{D,B}[B_1,...,B_n]$ is a complete $n$-partite graph with the parts $B_1,...,B_n$, where all the subsets $B_1,...,B_n$ are balls of the space $X$. In this case, clearly, $B=B_1\cup...\cup B_n$.

\begin{definition}\label{d21}
Let $(Z,d)$ be an ultrametric spaces and let $X,Y$, $X\neq Y$, be some subsets of $Z$.
Denote by $\mathfrak B_{XY}\subseteq \mathbf B_Z$ the set of all balls $B$ in the space $Z$ having simultaneously the following two properties.
\begin{itemize}
  \item [(i)] ($(X\setminus Y)\cap B\neq \varnothing \neq Y \cap B$) or ($(Y\setminus X)\cap B\neq \varnothing \neq X \cap B$).
  \item [(ii)] 
  Let
  \begin{equation}\label{e1}
  G_{D,B} = G_{D,B}[B_1,...,B_n].
  \end{equation}
  be the diametrical graph of the subspace $B\subseteq Z$.
  Then there exists at least one ball $B_k$, $k\in \{1,...,n\}$, such that
  $$
  (B_k\cap (X\setminus Y)\neq \varnothing \text{ and } B_k\cap Y=\varnothing)
\text{ \emph{xor} }
  (B_k\cap (Y\setminus X)\neq \varnothing \text{ and } B_k\cap X=\varnothing).
$$
\end{itemize}
As usual, $xor$ here is exclusive disjunction.
\end{definition}

\begin{remark}
It follows from condition (i) that if $|B|=1$, then $B\notin \mathfrak B_{XY}$.
\end{remark}

As usual  $X \Delta Y= (X\cup Y)\setminus(X \cap Y)$ is the symmetric difference of the sets $X$ and $Y$

\begin{lemma}\label{l1}
Let $Z$ be a finite ultrametric space and let $X, Y$, $X\neq Y$, be some nonempty subsets of $Z$. Then for every $x\in X\Delta Y$ there exists a ball $B\in \mathfrak B_{XY}$ such that $x\in B$.
\end{lemma}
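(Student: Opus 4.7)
The plan is to exploit the nested structure of balls in a finite ultrametric space. I would first reduce to the case $x \in X \setminus Y$, since the case $x \in Y \setminus X$ is symmetric under swapping $X$ and $Y$. Since $Z$ itself is a ball (it is the root of $T_Z$) containing $x$ and meeting the nonempty set $Y$, the family
$$\mathcal{F} := \{B \in \mathbf{B}_Z : x \in B \text{ and } B \cap Y \neq \varnothing\}$$
is nonempty. Because $\mathbf{B}_Z$ is finite and the balls of $Z$ containing $x$ form a chain under inclusion (via Proposition~\ref{lbpm} applied to the path from the leaf $\{x\}$ to the root in $T_Z$), I would choose $B$ to be the $\subseteq$-minimal element of $\mathcal{F}$.

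Next I would verify the two conditions of Definition~\ref{d21} for this $B$. Property (i) is immediate: $x \in (X\setminus Y) \cap B$ and $B \cap Y \neq \varnothing$ hold by construction, so the first disjunct of (i) is satisfied. For property (ii), the paragraph preceding Definition~\ref{d21} shows that $B$ (which is nonsingular since it contains both $x$ and a point of $Y$, and $x \notin Y$) decomposes as $B = B_1 \cup \cdots \cup B_n$ through the diametrical graph $G_{D,B} = G_{D,B}[B_1,\ldots,B_n]$, each $B_k$ being a ball of $Z$ strictly smaller than $B$ (since $n \geq 2$). Letting $B_{k_0}$ be the unique part containing $x$, the $\subseteq$-minimality of $B$ in $\mathcal{F}$ forces $B_{k_0} \cap Y = \varnothing$. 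Thus $B_{k_0} \cap (X\setminus Y) \ni x$ while $B_{k_0} \cap Y = \varnothing$, which is the first alternative in the xor of (ii). The second alternative is ruled out automatically, since $B_{k_0} \cap Y = \varnothing$ already implies $B_{k_0} \cap (Y\setminus X) = \varnothing$. Hence exactly one side of the xor holds, so $B \in \mathfrak{B}_{XY}$.

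The only thing requiring any care is the existence of the $\subseteq$-minimum in $\mathcal{F}$ and the nonsingularity of $B$. Both are automatic: the former follows from finiteness of $\mathbf{B}_Z$ together with the chain property of balls through $x$, and the latter from the observation that $\{x\} \notin \mathcal{F}$ (because $x \notin Y$), so the minimal $B$ has at least two points and admits a diametrical decomposition. No serious obstacle is expected; the argument is essentially a direct unpacking of Definition~\ref{d21} once the correct ball $B$ is singled out.
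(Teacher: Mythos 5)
Your proof is correct, and it takes a cleaner, more direct route than the paper's. You construct the witness explicitly: among the chain of balls containing $x$, you take the $\subseteq$-minimal one meeting $Y$, check condition (i) immediately, and get condition (ii) from minimality (the part $B_{k_0}$ of the diametrical decomposition containing $x$ cannot meet $Y$, and the second alternative of the \emph{xor} is then automatically excluded because $B_{k_0}\cap Y=\varnothing$). The paper instead argues by contradiction: it assumes that no ball $\tilde B_i$ in the chain $\{x\}=\tilde B_1\subset\dots\subset\tilde B_n=Z$ lies in $\mathfrak B_{XY}$, and descends from the root, showing at each step that the failure of condition (ii) for $\tilde B_i$ forces condition (i) to propagate down to $\tilde B_{i-1}$, until the smallest nonsingular ball $\tilde B_2$ is reached, where condition (ii) holds trivially with $B_k=\{x\}$. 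Both arguments rest on the same structural fact --- the balls containing $x$ form a finite chain --- but your direct minimality argument avoids the reductio and the case analysis of what the negation of the \emph{xor} condition entails, at the cost of possibly producing a different (smaller) witness ball than the one the paper's descent would stop at; since the lemma only asserts existence, this is immaterial. All the auxiliary facts you invoke (the chain property via Proposition~\ref{lbpm}, nonsingularity of the minimal element of $\mathcal F$, and the diametrical decomposition of a nonsingular ball into proper subballs) are available in the paper and used correctly.
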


\begin{proof}
Without loss of generality, consider that $x\in X\setminus Y$. Let $\{x\}=\tilde{B}_1\subset \tilde{B}_2\subset \dots \tilde{B}_{n-1}\subset \tilde{B}_n =Z$ be a sequence of all different balls containing $x$ and let $\tilde{B}_i \notin \mathfrak{B}_{XY}$ for all $i\in 2,...,n$. Clearly, condition (i) of Definition~\ref{d21} holds for $B=\tilde{B}_n$. Hence, condition (ii) of the same definition does not hold for $B=\tilde{B}_n$. Consequently,  for every ball $B_i$, $i\in \{1,...,n\}$ from~(\ref{e1}) only one of three following possibilities holds:

1) $B_i\cap(X\cup Y)=\varnothing$,

2) $X\cap Y \cap B_i\neq \varnothing$ and $(X\Delta Y)\cap B_i=\varnothing$,

3) Condition (i) of Definition~\ref{d21} holds with $B=B_i$.

Since $x\in \tilde{B}_{n-1}$, $x\in X\setminus Y$, and $\tilde{B}_{n-1}=B_i$ for some $i\in \{1,...,n\}$ we have that condition (i) holds for $B_i=\tilde{B}_{n-1}$. Repeating these considerations with every $\tilde{B}_{i}$ we see that even if all $\tilde{B}_{i}\notin \mathfrak{B}_{XY}$, $i=3,..,n$, the ball $\tilde{B}_{2}$ must belong to $\mathfrak{B}_{XY}$ since condition (i) holds for this ball by supposition of this procedure and condition (ii) holds because we can take $B_k=\{x\}$ in (ii).
\end{proof}

\begin{theorem}\label{t44}
Let $(Z,d)$ be an ultrametric space. Then for nonempty subsets $X,Y\subseteq Z$ the Hausdorff distance can be calculated as follows:
\begin{equation}\label{e45}
d_H(X,Y)=
\begin{cases}
0, &X=Y,\\
\max\limits_{B\in \mathfrak{B}_{XY}}\operatorname{diam} B, &\text{otherwise}.
\end{cases}
\end{equation}
\end{theorem}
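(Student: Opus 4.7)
The plan is to handle $X=Y$ trivially from the definition of $d_H$ and, for $X\neq Y$, to establish both inequalities against $M:=\max_{B\in\mathfrak{B}_{XY}}\operatorname{diam} B$. Since $X\Delta Y\neq\varnothing$, Lemma~\ref{l1} gives $\mathfrak{B}_{XY}\neq\varnothing$, so $M$ is well defined; and since $Z$ is finite,
\[
d_H(X,Y)=\max\Bigl\{\max_{x\in X}\operatorname{dist}(x,Y),\ \max_{y\in Y}\operatorname{dist}(y,X)\Bigr\},
\]
so I need only bound $\operatorname{dist}(x,Y)$ for $x\in X\setminus Y$ and, symmetrically, $\operatorname{dist}(y,X)$ for $y\in Y\setminus X$.

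For the upper bound $d_H(X,Y)\leq M$, I would fix $x\in X\setminus Y$ and invoke Lemma~\ref{l1} to produce $B\in\mathfrak{B}_{XY}$ with $x\in B$. The key observation is that \emph{either} disjunct of condition~(i) of Definition~\ref{d21} forces $Y\cap B\neq\varnothing$, because $(Y\setminus X)\cap B\subseteq Y\cap B$. Any $y\in Y\cap B$ then satisfies $d(x,y)\leq\operatorname{diam} B\leq M$, so $\operatorname{dist}(x,Y)\leq M$. The argument for $y\in Y\setminus X$ is symmetric, and points in $X\cap Y$ contribute distance $0$, so the bound follows.

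For the lower bound $d_H(X,Y)\geq M$, I would pick $B\in\mathfrak{B}_{XY}$ realising $\operatorname{diam} B=M$ and apply condition~(ii) of Definition~\ref{d21} to the decomposition $G_{D,B}=G_{D,B}[B_1,\dots,B_n]$, obtaining without loss of generality a part $B_k$ with $B_k\cap(X\setminus Y)\neq\varnothing$ and $B_k\cap Y=\varnothing$. Fix $x\in B_k\cap(X\setminus Y)$ and let $y\in Y$ be arbitrary. If $y\in B$, then the observation from the upper bound forces $y\in B_j$ for some $j\neq k$, so $\{x,y\}\in E(G_{D,B})$ and $d(x,y)=\operatorname{diam} B=M$. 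If $y\notin B$, I claim $d(x,y)>M$. Combining the two cases gives $\operatorname{dist}(x,Y)\geq M$, and hence $d_H(X,Y)\geq M$.

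The main obstacle is this last claim. To prove $d(x,y)>\operatorname{diam} B$ when $y\notin B$ I would appeal to the representing tree: by Proposition~\ref{lbpm} there is a unique node $u\in V(T_Z)$ with $L(T_u)=B$, and the lowest common ancestor $v$ of the leaves $\{x\}$ and $\{y\}$ in $T_Z$ must be a strict ancestor of $u$, since $y\notin L(T_u)$. Because labels strictly decrease along every root-to-leaf path in $T_Z$ by the construction in Procedure~\ref{p1}, we have $l(v)>l(u)=\operatorname{diam} B$; Lemma~\ref{l2} then yields $d(x,y)=l(v)>M$, completing the argument. This tree-structural input is precisely what turns the conditions of Definition~\ref{d21} into an \emph{exact} formula rather than merely a bound.
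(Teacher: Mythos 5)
Your proposal is correct and follows essentially the same route as the paper: Lemma~\ref{l1} together with condition~(i) of Definition~\ref{d21} gives the upper bound, and condition~(ii) applied to a ball of maximal diameter, combined with the complete multipartite structure of $G_{D,B}$ and the strict decrease of labels along root-to-leaf paths in $T_Z$, gives the lower bound. The only cosmetic difference is that you phrase the argument via the identity $d_H(X,Y)=\max\bigl\{\max_{x}\operatorname{dist}(x,Y),\max_{y}\operatorname{dist}(y,X)\bigr\}$, whereas the paper works directly with the neighborhoods $U_\varepsilon$ in~(\ref{e41}).
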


\begin{proof}
If $X=Y$, then the equality $d_H(X,Y)=0$ is evident. Suppose $X\neq Y$.
Let us show that for $r= \max\limits_{B\in \mathfrak{B}_{XY}}\operatorname{diam} B$ the relations $X\subset U_{r}(Y)$ and $Y\subset U_{r}(X)$ hold.
Let $y\in Y$. If $y\in X\cap Y$, then, clearly, $y\in U_r(X)=\bigcup\limits_{x\in X}B_r(x)$ for any $r>0$. Let $y\in Y\setminus X$. Consequently, Lemma~\ref{l1} implies that $y$ belongs to some ball $B\in \mathfrak{B}_{XY}$. According to condition (i) of Definition~\ref{d21} there exists  $x\in X$ such that $x\in B$. Since $\operatorname{diam} B\leqslant r$ we have $y\in B_r(x)$. Consequently, $y\in U_r(X)$. The inclusion $X\subset U_{r}(Y)$ can be shown analogously.

Let now $r<\max\limits_{B\in \mathfrak{B}_{XY}}\operatorname{diam} B$. Consequently, there exists $B\in \mathfrak{B}_{XY}$ such that $\operatorname{diam} B>r$. Let us show that at least one of the relations  $X\subset U_{r}(Y)$ or $Y\subset U_{r}(X)$ does not hold. Without loss of generality, in virtue of condition (ii) of Definition~\ref{d21} consider that in decomposition~(\ref{e1}) for the ball $B$ there exists a ball $B_k$ such that $B_k\cap (X\setminus Y)\neq \varnothing$ and
\begin{equation}\label{e3}
  B_k\cap Y=\varnothing.
\end{equation}
Let $x\in B_k\cap (X\setminus Y)$. Using Lemma~\ref{l2} we see that by~(\ref{e1}) and by~(\ref{e3}), the equality $d(x,y) = \operatorname{diam} B > r$ holds for all $y\in B\cap Y$. Moreover, using the property that the labels of a representing tree strictly decrease on any path from the root to a leaf, we see that the inequality $d(x,y) > \operatorname{diam} B$ holds for all $y \in Y\setminus B$. Hence $x$ does not belong to $U_r(Y)$.
Thus, the infinum in~(\ref{e41}) is achieved when $\varepsilon = \max\limits_{B\in \mathfrak{B}_{XY}}\operatorname{diam} B$.
\end{proof}

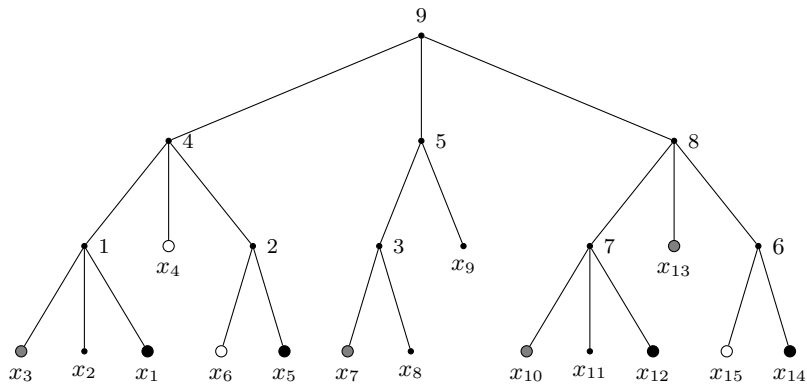
\begin{figure}[htb]
\begin{tikzpicture}[scale=1.4]
\tikzstyle{level 1}=[level distance=10mm,sibling distance=2.4cm]
\tikzstyle{level 2}=[level distance=10mm,sibling distance=0.8cm]
\tikzstyle{level 3}=[level distance=10mm,sibling distance=.6cm]
\tikzset{small node/.style={circle,draw,inner sep=0.7pt,fill=black}}
\tikzset{solid node/.style={circle,draw,inner sep=1.5pt,fill=black}}
\tikzset{white node/.style={circle,draw,inner sep=1.5pt,fill=white}}
\tikzset{common node/.style={circle,draw,inner sep=1.5pt,fill=gray}}

\node at (0,0) [small node, label=above: {\tiny $9$}] {}
child{node [small node, label=right: {\tiny $4$}] {}
	child{node [small node, label=right: {\tiny $1$}] {}
        	child{node [common node, label=below: {\scriptsize $x_3$}]{}}
            child{node [small node, label=below: {\scriptsize $x_2$}] {}}
            child{node [solid node, label=below: {\scriptsize $x_1$}] {}}
    }
   	child{node [white node, label=below: {\scriptsize $x_4$}] {}}
	child{node [small node, label=right: {\tiny $2$}] {}
        	child{node [white node, label=below: {\scriptsize $x_6$}] {}}
            child{node [solid node, label=below: {\scriptsize $x_5$}] {}}
    }
}
child{node [small node, label=right: {\tiny $5$}] {}
	child{node [small node, label=right: {\tiny $3$}] {}
        	child{node [common node, label=below: {\scriptsize $x_7$}] {}}
        	child{node [small node, label=below: {\scriptsize $x_8$}] {}}
    }
	child{node [small node, label=below: {\scriptsize $x_9$}] {}}
}
child{node [small node, label=right: {\tiny $8$}] {}
	child{node [small node, label=right: {\tiny $7$}] {}
        	child{node [common node, label=below: {\scriptsize $x_{10}$}] {}}
            child{node [small node, label=below: {\scriptsize $x_{11}$}] {}}
            child{node [solid node, label=below: {\scriptsize $x_{12}$}] {}}
    }
   	child{node [common node, label=below: {\scriptsize $x_{13}$}] {}}
	child{node [small node, label=right: {\scriptsize $6$}] {}
        	child{node [white node, label=below: {\scriptsize $x_{15}$}] {}}
            child{node [solid node, label=below: {\scriptsize $x_{14}$}] {}}
    }
};
\end{tikzpicture}
\caption{Rooted representing tree $T_Z$.}
\label{fig2}
\end{figure}

\begin{figure}[htb]
\begin{center}
\begin{tikzpicture}
\tikzset{small node/.style={circle,draw,inner sep=0.7pt,fill=black}}
\tikzset{solid node/.style={circle,draw,inner sep=1.5pt,fill=black}}
\tikzset{white node/.style={circle,draw,inner sep=1.5pt,fill=white}}
\tikzset{common node/.style={circle,draw,inner sep=1.5pt,fill=gray}}

\def\xx{0cm};
\def\yy{0cm};
\def\dx{0.8cm};

\foreach \i in {1}{
\draw (\i*\dx, \yy) node (\i) [below] {$x_{\i}$} -- (\i*\dx+\dx,\yy);
\draw [fill = black] (\i*\dx, \yy) circle [radius=2pt];
}

\foreach \i in {2}{
\draw (\i*\dx, \yy) node (\i) [below] {$x_{\i}$} -- (\i*\dx+\dx,\yy);
\draw [fill = black] (\i*\dx, \yy) circle [radius=1pt];
}

\foreach \i in {3}{
\draw (\i*\dx, \yy) node (\i) [below] {$x_{\i}$} -- (\i*\dx+\dx,\yy);
\draw [fill = gray] (\i*\dx, \yy) circle [radius=2pt];
}

\foreach \i in {4}{
\draw (\i*\dx, \yy) node (\i) [below] {$x_{\i}$} -- (\i*\dx+\dx,\yy);
\draw [fill = white] (\i*\dx, \yy) circle [radius=2pt];
}

\foreach \i in {5}{
\draw (\i*\dx, \yy) node (\i) [below] {$x_{\i}$} -- (\i*\dx+\dx,\yy);
\draw [fill = black] (\i*\dx, \yy) circle [radius=2pt];
}

\foreach \i in {6}{
\draw (\i*\dx, \yy) node (\i) [below] {$x_{\i}$} -- (\i*\dx+\dx,\yy);
\draw [fill = white] (\i*\dx, \yy) circle [radius=2pt];
}

\foreach \i in {7}{
\draw (\i*\dx, \yy) node (\i) [below] {$x_{\i}$} -- (\i*\dx+\dx,\yy);
\draw [fill = gray] (\i*\dx, \yy) circle [radius=2pt];
}

\foreach \i in {8}{
\draw (\i*\dx, \yy) node (\i) [below] {$x_{\i}$} -- (\i*\dx+\dx,\yy);
\draw [fill = black] (\i*\dx, \yy) circle [radius=1pt];
}

\foreach \i in {9}{
\draw (\i*\dx, \yy) node (\i) [below] {$x_{\i}$} -- (\i*\dx+\dx,\yy);
\draw [fill = black] (\i*\dx, \yy) circle [radius=1pt];
}

\foreach \i in {10}{
\draw (\i*\dx, \yy) node (\i) [below] {$x_{\i}$} -- (\i*\dx+\dx,\yy);
\draw [fill = gray] (\i*\dx, \yy) circle [radius=2pt];
}

\foreach \i in {11}{
\draw (\i*\dx, \yy) node (\i) [below] {$x_{\i}$} -- (\i*\dx+\dx,\yy);
\draw [fill = black] (\i*\dx, \yy) circle [radius=1pt];
}

\foreach \i in {12}{
\draw (\i*\dx, \yy) node (\i) [below] {$x_{\i}$} -- (\i*\dx+\dx,\yy);
\draw [fill = black] (\i*\dx, \yy) circle [radius=2pt];
}

\foreach \i in {13}{
\draw (\i*\dx, \yy) node (\i) [below] {$x_{\i}$} -- (\i*\dx+\dx,\yy);
\draw [fill = gray] (\i*\dx, \yy) circle [radius=2pt];
}

\foreach \i in {14}{
\draw (\i*\dx, \yy) node (\i) [below] {$x_{\i}$} -- (\i*\dx+\dx,\yy);
\draw [fill = black] (\i*\dx, \yy) circle [radius=2pt];
}

\foreach \i in {15}{
    \draw (\i*\dx, \yy) node (\i) [below] {$x_{\i}$};
    \draw [fill = white] (\i*\dx, \yy) circle [radius=2pt];
}

\def\myarr{{1,1,4,4,2,9,3,5,9,7,7,8,8,6}};

\foreach \i in {1,...,14}{
    \draw (\i*\dx+\dx/2, \yy) node [above] { \scriptsize \pgfmathparse{\myarr[\i-1]}\pgfmathresult};
}
\end{tikzpicture}
\end{center}
\caption{Minimum weight spanning path $P$.}
\label{fig3}
\end{figure}
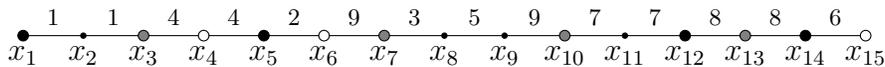

\begin{example}\label{e5}
In this example we are going to apply Theorem~\ref{t44} for calculating Hausdorff distance between subsets of a finite ultrametric space. Let $Z$ be an ultrametric space with the representing tree $T_Z$ depicted in Figure~\ref{fig2} and let
$$
X=\{x_3,x_4,x_6,x_7,x_{10},x_{13},x_{15}\}, 
Y=\{x_1,x_3,x_5,x_7,x_{10},x_{12},x_{13},x_{14}\}.
$$
The points of the subset $X\setminus Y$ are denoted by white circles, of the subset $Y\setminus X$ by black and of the subset $X\cap Y$ by gray.
Since the labeling of the tree $T_Z$ is injective using Proposition~\ref{lbpm} we see that there is one-to-one correspondence between the labels of inner nodes of $T_Z$ and the set of nonsingular balls of the space $Z$. Thus, denote by $B_i$ the ball with the diameter $i$, $i\in \operatorname{Sp}(Z)\setminus \{0\}$.
One can easily see that the following balls satisfy condition (i) of Definition~\ref{d21}: $B_1$, $B_4$, $B_2$, $B_9$, $B_8$, $B_7$, $B_6$.
Among them only the balls $B_1$, $B_4$, $B_2$, $B_7$, $B_6$ satisfy condition (ii) of the same definition. Thus,  $\mathfrak B_{XY}=\{B_1, B_2, B_4, B_6, B_7\}$.  By~(\ref{e45}) we have $d_H(X,Y)=7$.
\end{example}

\begin{remark}
One of the minimum spanning paths $P$ of the space $Z$ is  depicted in Figure~\ref{fig3}. The reader can easily check the correctness of Remark~\ref{r55} and establish the set of balls of the space $Z$ considering the path $P$. Verifying conditions (i) and (ii) of Definition~\ref{d21} using $P$ even is more easy than using $T_Z$. This observation again shows that minimum spanning paths are a convenient tool for studying finite ultrametric spaces.
\end{remark}

\section{Funding}
The research was partially supported by the National Academy of Sciences of Ukraine, Project 0117U002165 ``Development of mathematical models, numerically analytical methods and algorithms for solving modern medico-biological problems''.

\end{document}